\setlist[itemize]{topsep=0pt,after=\vspace{1.5\baselineskip}}
\let\rc@clearconstantlist\empty
\newcommand\rc@clearconstant[1]{\global\expandafter\let\csname rc@const@#1\endcsname\undefined}
\newcommand\resetconstants[1]{%
    \def\rc@constname{#1}
    \global\rc@count=1\relax 
    \bgroup 
        \let\\\rc@clearconstant 
        \rc@clearconstantlist
        \global\let\rc@clearconstantlist\empty 
    \egroup
}
\newcommand\const[1]{%
    \@ifundefined{rc@const@#1}{%
        \expandafter\xdef\csname rc@const@#1\endcsname{%
           \noexpand\rc@useconst{\rc@constname}{\the\rc@count}%
        }%
        \g@addto@macro\rc@clearconstantlist{\\{\mathrm{#1}}}%
        \global\advance\rc@count1\relax
    }{}%
    \csname rc@const@#1\endcsname
}
\newcommand\rc@useconst[2]{{#1}\textsubscript{#2}}
\setlist[itemize]{noitemsep, topsep=0pt}
\def\R{\mathbb R}
\def\R{\mathbb R}  
\def\TM{T_{max}} 
\def
\newtheorem{theorem}{Theorem}[section]
\newtheorem{lemma}[theorem]{Lemma}
\newtheorem{remark}{Remark}
\newtheorem{algorithm}{Algorithm}
\newcounter{cnstcnt}
\title[Boundedness in an attraction-repulsion chemotaxis system with nonlinear productions] 
      {
Some refined criteria toward boundedness in an attraction-repulsion chemotaxis system with nonlinear productions}
\author[A. Columbu, S. Frassu and G. Viglialoro]{}
\subjclass[2010]{Primary: 35A01, 35K55, 35Q92. Secondary:  92C17.}
\keywords{Chemotaxis, Global existence, Boundedness, Nonlinear production. \\
	\textit{$^*$Corresponding author}: giuseppe.viglialoro@unica.it}
\begin{document}
\maketitle

\centerline{\scshape Alessandro Columbu, Silvia Frassu \and Giuseppe Viglialoro$^{*}$}
\medskip
{
 \medskip
 \centerline{Dipartimento di Matematica e Informatica}
 \centerline{Universit\`{a} di Cagliari}
 \centerline{Via Ospedale 72, 09124. Cagliari (Italy)}
 \medskip
}
\bigskip
\begin{abstract}
We study some zero-flux attraction-repulsion chemotaxis models,  with nonlinear production rates for the chemorepellent and the chemoattractant, whose formulation can be schematized as:
\begin{equation}\label{problem_abstract}
\tag{$\Diamond$}
\begin{cases}
u_t= \Delta u - \chi \nabla \cdot (u \nabla v)+\xi \nabla \cdot (u \nabla w)  & \textrm{ in } \Omega \times (0,T_{max}),\\
\tau v_t=\Delta v-\varphi(t,v)+f(u)  & \textrm{ in } \Omega \times (0,T_{max}),\\
\tau w= \Delta w - \psi(t,w) + g(u)& \textrm{ in } \Omega \times (0,T_{max}).
\end{cases}
\end{equation}
In this problem, $\Omega$ is a bounded and smooth domain of $\R^n$, for $n\geq 2$, $\chi,\xi>0$, $f(u)$, $g(u)$ reasonably regular functions generalizing, respectively, the prototypes $f(u)=\alpha u^k$ and $g(u)= \gamma u^l$, for some $k,l,\alpha,\gamma>0$ and all $u\geq 0$. Moreover, $\varphi(t,v)$ and  $\psi(t,w)$ have specific expressions, $\tau\in\{0,1\}$ and $\Theta:=\chi \alpha-\xi\gamma.$  Once for any sufficiently smooth $u(x,0)=u_0(x)\geq 0$, $\tau v(x,0)=\tau v_0(x)\geq 0$ and $\tau w(x,0)=\tau w_0(x)\geq 0$ the local well posedness of problem \eqref{problem_abstract} is ensured, we establish for the classical solution $(u,v,w)$ defined in $\Omega \times (0,T_{max})$ that the life span is indeed $\TM=\infty$ and $u, v$ and $w$ are uniformly bounded in $\Omega\times (0,\infty)$ in the following cases:
\begin{enumerate}[label=\Roman*),ref=\Roman*]
\item For $\varphi(t,v)=\beta v$, $\beta>0$,  $\psi(t,w)=\delta w$, $\delta>0$ and $\tau=0$, provided 
\begin{enumerate}[label=\theenumi.\arabic*),ref=\theenumi.\arabic*]
\item  $k<l$;
\item $k,l\in\left(0,\frac{2}{n}\right)$;
\item \label{AbstractAss} $k=l$ and $\Theta<0$, or 
 $l=k\in\left(0,\frac{2}{n}\right)$ and $\Theta\geq 0.$ 
\end{enumerate}
\item For $\varphi(t,v)=\beta v$, $\beta>0$,  $\psi(t,w)=\delta w$, $\delta>0$ and 
$\tau=1$, whenever 
\begin{enumerate}[label=\theenumi.\arabic*),ref=\theenumi.\arabic*]
\item $l,k\in \left(0,\frac{1}{n}\right]$;
\item  $l\in \left(\frac{1}{n},\frac{1}{n}+\frac{2}{n^2+4}\right)$ and $k\in \left(0,\frac{1}{n}\right]$, or  $k\in \left(\frac{1}{n},\frac{1}{n}+\frac{2}{n^2+4}\right)$ and $l\in \left(0,\frac{1}{n}\right]$;
\item  $l,k\in \left(\frac{1}{n},\frac{1}{n}+\frac{2}{n^2+4}\right).$
\end{enumerate} 
\item \label{OpenQuestionAbstractSolved} For $\varphi(t,v)=\frac{1}{|\Omega|}\int_\Omega f(u)$ and $\psi(t,w)=\frac{1}{|\Omega|}\int_\Omega g(u)$ and $\tau=0$, under the assumptions $k<l$ or \ref{AbstractAss}).
\end{enumerate}
In particular, in this paper we partially improve what derived in \cite{ViglialoroMatNacAttr-Repul} and solve an open question given in \cite{LiuLi-2021NLARWA}. Finally, the research is complemented with numerical simulations in bi-dimensional domains. 
\end{abstract}
\resetconstants{c}
\section{Introduction and motivations}
\subsection{Presentation of the model}\label{IntroSection}
This article is mainly dedicated to the following Cauchy boundary problem
\begin{equation}\label{problem}
\begin{cases}
u_t= \Delta u - \chi \nabla \cdot (u \nabla v)+\xi \nabla \cdot (u \nabla w)  & \textrm{ in } \Omega \times (0,T_{max}),\\
\tau v_t=\Delta v-\beta v +f(u)  & \textrm{ in } \Omega \times (0,T_{max}),\\
\tau w_t= \Delta w - \delta w + g(u)& \textrm{ in } \Omega \times (0,T_{max}),\\
u_{\nu}=v_{\nu}=w_{\nu}=0 & \textrm{ on } \partial \Omega \times (0,T_{max}),\\
u(x,0)=u_0(x), \; \tau v(x,0)=\tau v_0(x), \; \tau v(x,0)=\tau v_0(x) & x \in \bar\Omega,
\end{cases}
\end{equation}
defined  in a bounded and smooth domain $\Omega$ of $\R^n$, with $n\geq 2$, $\chi, \xi, \beta,\delta>0$, $\tau\in\{0,1\}$ and some functions $f=f(s)$ and $g=g(s)$, sufficiently regular in their argument $s\geq 0$, and essentially behaving as $s^k$ and $s^l$, for some $k,l>0$, and further regular initial data $u_0(x),\tau v_0(x),\tau w_0(x)\geq 0$.  Moreover,  $u_\nu$ (and similarly for $v$ and $w$) indicates the outward normal derivative of $u$ on $\partial \Omega$, whereas $T_{max}$ identifies the maximum time up to which solutions to the system can be extended. 

In conformity with the celebrated Keller--Segel models (\cite{K-S-1970,Keller-1971-MC,Keller-1971-TBC}), if $u=u(x,t)$ stands for a certain population density (bacteria, organisms, cells)  at the 
position $x$ and at the time $t$, and $v=v(x,t)$ and $w=w(x,t)$ denote, respectively,  the concentration of an attractive and repulsive chemical signal (chemoattractant and chemorepellent),  model \eqref{problem} idealizes the natural mechanism in which the 
automatic diffusion of the cells (the Laplacian $\Delta u$), initially distributed according to $u_0$, is perturbed by the aggregation/repulsion impact from the cross terms $\chi u \nabla v/\xi u \nabla w$ (increasing for larger values of $\chi$ and $\xi$). In addition,  the initial distribution $\tau v_0$ (respectively, $\tau w_0$) of the chemoattractant (respectively, chemorepellent)  diffuses and cells contribute to its growth in agreement to the law $f(u)$ (respectively, $g(u)$); the specified dynamics occurs in a totally impenetrable domain (homogeneous Neumann boundary conditions).
 \subsection{A view on the state of the art} 
In the context of the mentioned Keller--Segel model with single productive signal, problem \eqref{problem} is a combination of this aggregative signal-production mechanism  
\begin{equation}\label{problemOriginalKS} 
u_t= \Delta u - \chi  \nabla \cdot (u \nabla v) \quad \textrm{and} \quad 
\tau v_t=\Delta v-v+u,  \quad  \textrm{ in } \Omega \times (0,T_{max}),
\end{equation}
and this repulsive signal-production one 
\begin{equation}\label{problemOriginalKSCosnumption}
u_t= \Delta u+ \xi \nabla \cdot (u \nabla w)  \quad \textrm{and} \quad 
\tau w_t=\Delta v-u +w,  \quad  \textrm{ in } \Omega \times (0,T_{max}).
\end{equation}
Let us discuss some details on these problems and, since we will deal with classical solutions, we confine our attention to selected references; in this sense, we precise that the literature is richer. 

From the one hand,  if problem \eqref{problemOriginalKS} is considered, since the attractive signal $v$ increases with $u$, the inertial homogenization process of the cells' density could interrupt and very high and spatially concentrated spikes formations (\textit{chemotactic collapse}) may appear; this is especially due to the size of $\chi$, the initial mass of the particle distribution, i.e.,  $m=\int_\Omega u_0(x)dx,$ and the space dimension. Indeed, if in the one-dimensional setting blow-up phenomena are excluded  (see \cite{OsYagUnidim}), in higher dimensions if $m \chi$  surpasses a certain critical value $m_\chi$, the system might present the aforementioned chemotactic collapse, whereas for $m\chi<m_\chi$ no instability appears in the motion of the cells. There are many contributions dedicated to understanding this scenario. In this regard, in \cite{HerreroVelazquez,JaLu,Nagai,WinklAggre} (and references therein cited), the interested reader can find pointers to the rich literature dealing with the existence and properties of global, uniformly bounded or blow-up (local) solutions to the Cauchy problem associated to \eqref{problemOriginalKS}. On the other hand, as far as nonlinear segregation chemotaxis models like those we are considering, when in problem \eqref{problemOriginalKS}  the production $f(u)=u$ is replaced by $f(u)\cong u^k$, with  $0<k<\frac{2}{n}$ ($n\geq1$), uniform boundedness of all its solutions is proved in \cite{LiuTaoFullyParNonlinearProd}. Moreover, by resorting to a simplified parabolic-elliptic version in spatially radial contexts, when the second equation is reduced to $0=\Delta v-\frac{1}{|\Omega|}\int_\Omega f(u)+f(u)$, with $f(u)\cong u^k$, it is known  (see \cite{WinklerNoNLinearanalysisSublinearProduction}) that  the same conclusion on the boundedness continues to be valid for any $n\geq 1$ and $0<k<\frac{2}{n}$, whereas for $k>\frac{2}{n}$ blow-up phenomena may occur. 

Indeed, from the other hand, the literature on model \eqref{problemOriginalKSCosnumption} is rather  sparse and general (see, for instance, \cite{Mock74SIAM,Mock75JMAA} for analyses on similar contexts). In particular, no result on the blow-up scenario is available, and this is conceivable due to the repulsive nature of the mechanism.

Oppositely, quite different is the level of knowledge for attraction-repulsion chemotaxis problems accounting both \eqref{problemOriginalKS} and \eqref{problemOriginalKSCosnumption}; more exactly, in order to bring together precise results that can be compared each other, or at least discussed in a common spirit,  we point out that \textit{herein we prefer to focus on models exactly formulated as in \eqref{problem}}, otherwise the discussion might become too dispersive (see further comments in Remark \ref{RemarkOtherResults}). In the specific, we will consider not only analyses connected to the parabolic-elliptic-elliptic and the fully parabolic case of model \eqref{problem}, but also to a nonlocal version of the model itself. 
\subsection*{The case $\tau=0$} For linear growths of the chemoattractant and the chemorepellent, $f(u)=\alpha u$, $\alpha>0$, and $g(u)=\gamma u$, $\gamma>0$, 
we have that $\Theta:=\chi \alpha-\xi\gamma$, value of the difference between the attraction and repulsion impacts, is such that whenever $\Theta<0$ (repulsion-dominated regime), in any dimension all solutions to the model are globally bounded, whereas for $\Theta>0$ (attraction-dominated regime) and $n=2$  unbounded solutions can be detected (see \cite{GuoJiangZhengAttr-Rep,LI-LiAttrRepuls,TaoWanM3ASAttrRep,VIGLIALORO-JMAA-BlowUp-Attr-Rep,YUGUOZHENG-Attr-Repul} for some details on the issue). Indeed, for more general expressions of the production laws, respectively $f$ and $g$ generalizing the prototypes $f(u)=\alpha u^k$, $k>0,$ and $g(u)=\gamma u^l$, $l>0$, as far as we know the following are the most recent results, valid for $n\geq 2$ and derived in \cite{ViglialoroMatNacAttr-Repul}: For every  $\alpha,\beta,\gamma,\delta,\chi>0$, and $l>k\geq 1$ (resp. $k>l\geq 1$), there exists $\xi^*>0$ (resp. $\xi_*>0$) such that if $\xi>\xi^*$ (resp. $\xi\geq \xi_*$), any sufficiently regular initial datum $u_0(x)\geq 0$ (resp. $u_0(x)\geq 0$ small in some Lebesgue space) emanates a unique classical solution, uniformly bounded in time. Moreover, the same conclusion continues valid for any $\alpha,\beta,\gamma,\delta,\chi,\xi>0$, any sufficiently regular $u_0(x)\geq 0$ if the hypotheses $0<k<1$ and $l=1$ are accomplished. 
\subsection*{The case $\tau=1$} First, we point out that in the context of real applications model \eqref{problem}  was introduced in \cite{Luca2003Alzheimer} for one-dimensional settings and linear proliferation, to describe the aggregation of microglia observed in Alzheimer's disease. 

Moreover, from the stricter mathematical point of view, in \cite{TaoWanM3ASAttrRep} it is proved that in two-dimensional domains, for $f(u)=\alpha u$ and $g(u)=\gamma u$, sufficiently smooth initial data generate global-in-time bounded solutions whenever (recall $\Theta=\chi \alpha-\xi \gamma$) 
$$\Theta<0 \quad \textrm{and} \quad \beta=\delta  \quad \textrm{or} \quad  \Theta<0 \quad \textrm{and} \quad  -\frac{\chi^2\alpha^2(\beta-\delta)^2}{2\Theta \beta^2 C} \int_\Omega u_0(x)\leq 1, \quad \textrm{for some }\; C>0.$$
\subsection*{The nonlocal case} If we take into account this nonlocal version of model \eqref{problem}
\begin{equation}\label{problemConFunzioniTempo}
\begin{cases}
u_t= \Delta u - \chi \nabla \cdot (u \nabla v)+\xi \nabla \cdot (u \nabla w)  & \textrm{ in } \Omega \times (0,T_{max}),\\
0=\Delta v-\frac{1}{|\Omega|}\int_\Omega f(u) +f(u)  & \textrm{ in } \Omega \times (0,T_{max}),\\
0= \Delta w -\frac{1}{|\Omega|}\int_\Omega g(u) + g(u)& \textrm{ in } \Omega \times (0,T_{max}),\\
u_{\nu}=v_{\nu}=w_{\nu}=0 & \textrm{ on } \partial \Omega \times (0,T_{max}),\\
u(x,0)=u_0(x) & x \in \bar\Omega,\\
\int_\Omega v(x,t)dx=\int_\Omega w(x,t)dx=0 & t\in  (0,T_{max}),
\end{cases}
\end{equation}
in \cite{LiuLi-2021NLARWA} it is established that if $k>l$ and $k>\frac{2}{n}$, there are initial data producing solutions blowing up at some finite time while, for any $l>0$,
if $0<k < \frac{2}{n}$ such a phenomenon is prevented and all solutions are globally bounded in time. For the sake of clarity, in this model $v$ represents the \textit{deviation} of the chemical signal (and not the chemical substance itself, as previously indicated). In particular, since the deviation measures the difference between the signal concentration and its mean, rigorously in the model we would substitute $v$ with $\bar{v}$, being $\bar{v}=v-\frac{1}{|\Omega|}\int_\Omega v$. We avoid this relabeling, but we observe that conversely to what happens to the cell and chemical densities (which are nonnegative) the \textit{deviation} changes sign. In particular, from its definition, suddenly it is seen that its mean is zero (as fixed in the last assumptions of model \eqref{problemConFunzioniTempo}), which in turn ensures the uniqueness of the solution for the Poisson equation under homogeneous Neumann boundary conditions. Exactly the same explanation is valid for $w$.  
\begin{remark}[Studies in close contexts to model \eqref{problem}]\label{RemarkOtherResults}
The reason why we are putting our attention to models where diffusion and both sensitivities are linear and no logistic source perturbs the overall dynamics of the cells's density (as in the first equation of \eqref{problem}), is due to the consideration that when this is replaced with
\begin{equation}\label{NonLinearDiffSentLogistic-FirstEq}
u_t=\nabla (\cdot D(u)\nabla u -S(u)\nabla v+T(u)\nabla w)+h(u),
\end{equation}
for some $D(u), S(u), T(u)$ and $h(u)$, the overall mathematical analysis is in some sense more manageable. Indeed, for standard situations where $D(u)\simeq u^{m_1}, S(u)\simeq u^{m_2}, T(u)\simeq u^{m_3}$ and $h(u)\simeq \lambda u- \mu u^\vartheta$ (where $m_1,m_2,m_3,\lambda,\mu,\vartheta$ assume some real values), the technical machinery involving functional and algebraic inequalities is more inclined to work exactly due to the leeway given by those parameters. Naturally, this room for maneuver is much more reduced when the parameters are fixed and the specific model is not always recoverable from the more general one; moreover no direct comparison between the two formulations is strictly possible.

Despite that, we herein give hints to the most recent outcomes obtained in models close to \eqref{problem}. For linear diffusion and sensitivities, linear and nonlinear productions and logistic terms, criteria toward boundedness, long time behaviors and  blow-up issues for related solutions to \eqref{problem} in the case $\tau=0$, are studied in \cite{LiangEtAlAtt-RepNonLinProdLogist-2020,XinluEtAl2022-Asymp-AttRepNonlinProd,ChiyoMarrasTanakaYokota2021}. Similar analyses, in the case $\tau=1$, are discussed in \cite{GuoqiangBin-2022-3DAttRep}. Finally, moving to scenario where the particles' density obeys an equation as \eqref{NonLinearDiffSentLogistic-FirstEq}, with $D,S,T,h$ behaving as specified, for both the cases $\tau=0$ and $\tau=1$ for the equation of $v$ and $w$ we suggest the interesting paper \cite{GuoqiangBinATT-RepNonlinDiffSensLogistic}.   
\end{remark}
\section{Claims of the Theorems}
In this research we aim at extending the mathematical comprehension of attraction-repulsion Keller--Segel systems with linear diffusion and sensitivity and without logistics, by giving answers to questions not yet covered in the literature or improving already established results. 

To this scope, once these hypotheses are fixed
\begin{equation}\label{f}
f,g \in C^1([0,\infty)) \quad \textrm{with} \quad   0\leq f(s)\leq \alpha s^k  \textrm{ and } \gamma_0 (1+s)^l\leq g(s)\leq \gamma_1 (1+s)^l,\quad  \textrm{for some}\; \alpha, k, l >0, \gamma_1\geq\gamma_0>0,
\end{equation}
what follows is shown.
\begin{theorem}\label{MainTheorem}
Let $\Omega$ be a smooth and bounded domain of $\mathbb{R}^n$, with $n\geq 2$, $\tau=0$ and $\chi, \xi,\beta, \delta$ positive. Moreover, for some $\alpha,\gamma_0,\gamma_1>0$, let $f$ and $g$ comply with assumptions \eqref{f}
in at least one of these situations:
\begin{enumerate}
\item \label{itemkminltheoremelliptic} $k<l$;
\item $k,l\in\left(0,\frac{2}{n}\right)$;
\item \label{itemkUGUALEltheoremelliptic} $k=l$ and $\Theta_0:=\chi \alpha - \xi \gamma_0<0$, or $l=k\in\left(0,\frac{2}{n}\right)$ and $\Theta_0 \geq 0$. 
\end{enumerate}
Then for any initial data $u_0\in W^{1,\infty}(\bar{\Omega})$, with $u_0\geq 0$ on 
$\bar{\Omega}$, problem \eqref{problem} admits a unique global and uniformly bounded classical solution.  
\end{theorem}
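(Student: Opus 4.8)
The plan is to follow the classical testing-and-bootstrap scheme for parabolic--elliptic Keller--Segel systems, exploiting the simplification coming from $\tau=0$: at each time the signals $v$ and $w$ are elliptically slaved to $u$. First I would record the standard preliminaries. Local well-posedness (assumed in the excerpt) comes with an extensibility criterion of the form ``if $T_{max}<\infty$ then $\limsup_{t\to T_{max}}\|u(\cdot,t)\|_{L^\infty(\Omega)}=\infty$''; hence it suffices to bound $\|u(\cdot,t)\|_{L^\infty}$ uniformly on $(0,T_{max})$. Integrating the first equation and using the Neumann conditions gives mass conservation $\|u(\cdot,t)\|_{L^1(\Omega)}=\|u_0\|_{L^1(\Omega)}=:m$, the natural starting point. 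From the second and third equations, elliptic $L^q$-theory yields $\|v\|_{W^{2,q}}\le C\,\|f(u)\|_{L^q}\le C(1+\|u\|_{L^{qk}}^{k})$ and $\|w\|_{W^{2,q}}\le C\,\|g(u)\|_{L^q}\le C(1+\|u\|_{L^{ql}}^{l})$, together with $v,w\ge0$ by the maximum principle.

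The core is an a priori $L^p$ estimate. Testing the first equation against $u^{p-1}$ (with $p>1$ large) and integrating by parts twice, using $\Delta v=\beta v-f(u)$ and $\Delta w=\delta w-g(u)$, I obtain
\begin{multline*}
\frac1p\frac{d}{dt}\int_\Omega u^p+\frac{4(p-1)}{p^2}\int_\Omega|\nabla u^{p/2}|^2 +\frac{\xi(p-1)}{p}\int_\Omega u^p g(u) +\frac{\chi(p-1)\beta}{p}\int_\Omega u^p v \\ =\frac{\chi(p-1)}{p}\int_\Omega u^p f(u)+\frac{\xi(p-1)\delta}{p}\int_\Omega u^p w.
\end{multline*}
The two left-hand terms involving $v$ and $g$ are favorable (the $v$-coupling is nonnegative and may be discarded, while $-\int u^p g(u)\le-\gamma_0\int u^{p+l}+\dots$ supplies damping of order $p+l$); the two right-hand terms are the obstruction, namely the attraction--production term $\int u^p f(u)\le\alpha\int u^{p+k}$ and the repellent-coupling term $\int u^p w$.

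Everything then reduces to absorbing the right-hand side into the left, and I would treat the three regimes as follows. In case~\ref{itemkminltheoremelliptic} ($k<l$) the damping $-\int u^{p+l}$ dominates: since $p+k<p+l$, Young's inequality gives $\int u^{p+k}\le\epsilon\int u^{p+l}+C_\epsilon$. In case~\ref{itemkUGUALEltheoremelliptic} with $\Theta_0<0$, the production terms combine, because $k=l$, into $\tfrac{p-1}{p}\Theta_0\int u^{p+k}$ with $\Theta_0<0$, which is itself favorable and leaves a surplus $-c\int u^{p+k}$. In the remaining subcritical situations ($k,l\in(0,2/n)$, or $k=l\in(0,2/n)$ with $\Theta_0\ge0$) neither production term has a good sign, but $k<2/n$ makes $\int u^{p+k}$ subcritical with respect to the diffusion term, so the Gagliardo--Nirenberg inequality (combined with the $L^1$-bound $m$) yields $\int u^{p+k}\le\epsilon\int|\nabla u^{p/2}|^2+C_\epsilon$. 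The repellent-coupling term $\int u^p w$ is handled uniformly across the cases through the elliptic bound on $w$: by Hölder and $\|w\|_{L^r}\le C\|g(u)\|_{L^q}$ with a gain of two derivatives (so $r$ may be taken strictly larger than dictated by scaling), one estimates $\int u^p w$ by a product of $L^q$-norms of $u$ whose effective homogeneity is strictly below $p+l$ (resp.\ below the diffusion scaling in the subcritical cases); Young's inequality then splits off an arbitrarily small multiple of the available good term plus a constant. The outcome is a differential inequality of the type $y'(t)\le C-c\,y(t)$, whence $\sup_{t<T_{max}}\|u(\cdot,t)\|_{L^p(\Omega)}<\infty$ for every such large $p$.

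Finally I would bootstrap to $L^\infty$. With $\|u(\cdot,t)\|_{L^p}$ bounded for some $p>n$, elliptic regularity promotes $\nabla v,\nabla w$ to bounded functions in a space strong enough that a Moser-type iteration (or the variation-of-constants representation for the first equation together with smoothing estimates for the Neumann heat semigroup) yields a uniform bound on $\|u(\cdot,t)\|_{L^\infty(\Omega)}$. By the extensibility criterion this forces $T_{max}=\infty$ and delivers the asserted uniform boundedness of $u$, and then of $v$ and $w$. The step I expect to be the main obstacle is the simultaneous absorption in the a priori estimate: one must control the repellent-coupling term $\int u^p w$ at the same time as the attraction term $\int u^{p+k}$, keeping all the absorbing constants admissible uniformly in $p$. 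This forces a careful choice of the interpolation and elliptic-integrability exponents and, in the critical case $k=l$, a quantitative use of the sign of $\Theta_0$ to retain enough surplus damping to also dominate $\int u^p w$.
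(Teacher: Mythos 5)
Your proposal is correct, and its skeleton coincides with the paper's: testing with $u^{p-1}$, substituting $\Delta v=\beta v-f(u)$ and $\Delta w=\delta w-g(u)$ from the elliptic equations, discarding the favorable $v$-coupling, retaining the damping $-(p-1)\xi\gamma_0\int_\Omega u^{p+l}$ supplied by the repellent production, then exactly the same three-way case analysis (Young's inequality when $k<l$; the sign of $\Theta_0$ when $k=l$; Gagliardo--Nirenberg against the diffusion when the relevant exponent lies in $\left(0,\frac{2}{n}\right)$), closing with an absorptive differential inequality and the boundedness criterion \eqref{BoundednessCriterio}. The one step where you take a genuinely different route is the control of the repellent coupling $\int_\Omega u^p w$, which you rightly single out as the crux. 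The paper splits it by Young's inequality into $\epsilon_1\int_\Omega u^{p+l}+c\int_\Omega w^{\bar{p}+1}$ with $\bar{p}=p/l$ and then invokes the Ehrling-type Lemma \ref{EllipticEhrlingSystemLemma}, which returns $\sigma\int_\Omega g(u)^{\bar{p}+1}$ with $\sigma$ arbitrarily small plus a nonlocal remainder $c\left(\int_\Omega g(u)\right)^{\bar{p}+1}$; that remainder is bounded via the mass \eqref{massConservation} when $l\leq 1$ but needs the extra Gagliardo--Nirenberg/Young step \eqref{GN11} (absorption into the diffusion term) when $l>1$. You instead use H\"{o}lder, the elliptic estimate $\|w\|_{W^{2,s}(\Omega)}\leq C\|g(u)\|_{L^s(\Omega)}$ with the Sobolev gain $W^{2,s}(\Omega)\hookrightarrow L^{(p+l)/l}(\Omega)$ for some $s<\frac{p+l}{l}$, and interpolation of the resulting norms of $u$ against the conserved mass, so that the total $L^{p+l}$-homogeneity of your bound is strictly below $p+l$; a single application of Young's inequality then yields $\epsilon\int_\Omega u^{p+l}+C_\epsilon$ for every $l$, with no recourse to the gradient term. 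The two devices produce the same output (an arbitrarily small multiple of $\int_\Omega u^{p+l}$), and indeed the quoted lemma is elliptic regularity in disguise: your variant is self-contained and uniform in $l$, while the paper's is shorter on the page because that work is packaged in an external lemma. Two minor remarks: uniformity of the absorbing constants in $p$, which you worry about, is not actually needed (one fixed $p>\frac{n}{2}$ suffices for the criterion, or $p>n$ for your semigroup/Moser bootstrap); and your linear inequality $y'\leq C-cy$ follows from the leftover damping via H\"{o}lder, whereas the paper keeps the gradient term and derives the superlinear version $y'\leq C-cy^{1/\theta}$ --- either one closes the argument.
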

\begin{theorem}\label{Main1Theorem}
Let $\Omega$ be a smooth and bounded domain of $\mathbb{R}^n$, with $n\geq 2$, $\tau=1$ and $\chi, \xi,\beta, \delta$ positive. 
Moreover, for some $\alpha,\gamma_0,\gamma_1>0$, let $f$ and $g$ comply with assumptions \eqref{f} in at least one of these situations:
\begin{enumerate}
\item \label{Item1-Parabol} $l,k\in \left(0,\frac{1}{n}\right]$;
\item  $l\in \left(\frac{1}{n},\frac{1}{n}+\frac{2}{n^2+4}\right)$ and $k\in \left(0,\frac{1}{n}\right]$, or  $k\in \left(\frac{1}{n},\frac{1}{n}+\frac{2}{n^2+4}\right)$ and $l\in \left(0,\frac{1}{n}\right]$;
\item  $l,k\in \left(\frac{1}{n},\frac{1}{n}+\frac{2}{n^2+4}\right).$
\end{enumerate} 
Then for any initial data  $(u_0, v_0,  w_0)\in (W^{1,\infty}(\Omega))^3$, with $u_0, v_0,w_0\geq 0$ on $\bar{\Omega}$, problem \eqref{problem} admits a unique global and uniformly bounded classical solution.  
\end{theorem}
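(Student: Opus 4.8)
The plan is to establish a uniform-in-time bound on $\|u(\cdot,t)\|_{L^p(\Omega)}$ for some $p>\frac n2$ and then upgrade it to an $L^\infty$ bound, which, together with the standard extensibility criterion, forces $\TM=\infty$ and uniform boundedness. First I would invoke the local existence theory (an Amann-type fixed point argument) to obtain a maximal classical solution on $(0,\TM)$ together with the dichotomy that $\TM<\infty$ entails $\limsup_{t\to\TM}\bigl(\|u(\cdot,t)\|_{L^\infty}+\|v(\cdot,t)\|_{W^{1,\infty}}+\|w(\cdot,t)\|_{W^{1,\infty}}\bigr)=\infty$. Integrating the first equation over $\Omega$ and using the no-flux conditions gives mass conservation $\int_\Omega u(\cdot,t)=\int_\Omega u_0=:m$, which supplies the baseline $L^1$ control from which every higher-integrability estimate will be bootstrapped.

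The heart of the argument is a differential inequality for $\frac{d}{dt}\int_\Omega u^p$. Testing the $u$-equation by $u^{p-1}$ and integrating by parts produces
\[
\frac1p\frac{d}{dt}\int_\Omega u^p+\frac{4(p-1)}{p^2}\int_\Omega|\nabla u^{p/2}|^2=\frac{\chi(p-1)}{p}\int_\Omega\nabla u^p\cdot\nabla v-\frac{\xi(p-1)}{p}\int_\Omega\nabla u^p\cdot\nabla w.
\]
Rather than substituting $\Delta v$ and $\Delta w$ from the chemical equations (which in the fully parabolic case $\tau=1$ would introduce the awkward terms $\int u^p v_t$ and $\int u^p w_t$), I would estimate the two cross integrals directly: writing $\nabla u^p=2u^{p/2}\nabla u^{p/2}$ and applying Young's inequality yields, up to constants, the controlling quantities $\int_\Omega u^p|\nabla v|^2$ and $\int_\Omega u^p|\nabla w|^2$, with a small multiple of $\int_\Omega|\nabla u^{p/2}|^2$ to be absorbed on the left. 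Crucially, because both cross terms are dominated via Young's inequality rather than by exploiting the favourable sign of the repulsion, the sign of $\Theta$ plays no role here; this is exactly why the hypotheses of the theorem restrict only the sizes of $k$ and $l$.

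To close the estimate I would first obtain, through the variation-of-constants representation for $v$ and $w$ and the $L^r$--$L^q$ smoothing estimates for the Neumann heat semigroup, uniform bounds $\|\nabla v(\cdot,t)\|_{L^{q}}\le C$ and $\|\nabla w(\cdot,t)\|_{L^{q}}\le C$, valid provided $\|u(\cdot,t)\|_{L^{kr}}$ (resp.\ $\|u(\cdot,t)\|_{L^{lr}}$) is already controlled and the time integral $\int_0^\infty\bigl(1+\sigma^{-\frac12-\frac n2(\frac1r-\frac1q)}\bigr)e^{-\lambda\sigma}\,d\sigma$ converges, i.e.\ $\frac1r-\frac1q<\frac1n$. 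Inserting these gradient bounds into $\int_\Omega u^p|\nabla v|^2\le\|u\|_{L^{pa}}^{p}\|\nabla v\|_{L^{2a'}}^{2}$ by Hölder, and then applying the Gagliardo--Nirenberg inequality to $\|u^{p/2}\|_{L^{2a}}^2$ (with the low-order factor $\|u^{p/2}\|_{L^{2/p}}^{2}=m^{p}$ pinned down by mass conservation), reduces the matter to forcing the resulting interpolation exponent $\theta$ to be strictly below $1$, so that Young's inequality absorbs $\|\nabla u^{p/2}\|_{L^2}^2$ into the dissipation. Tracking the admissible choices of $r$, $q$, $a$ through the two constraints $\frac1r-\frac1q<\frac1n$ and $\theta<1$ is precisely where the thresholds $k,l\le\frac1n$ and $k,l<\frac1n+\frac{2}{n^2+4}$ emerge, and I expect this bookkeeping, reconciling the semigroup integrability requirement with the interpolation budget in a single self-consistent exponent choice (unwound by a short bootstrap that climbs from the $L^1$ datum), to be the main technical obstacle. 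Once a uniform $L^p$ bound with $p>\frac n2$ is in hand, a final semigroup (or Moser-type) iteration promotes it to the uniform $L^\infty$ bound, completing the proof.
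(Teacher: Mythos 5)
Your proposal is correct and follows essentially the same route as the paper: testing with $u^{p-1}$, dominating both cross terms by Young's inequality (so the sign of $\Theta$ is irrelevant, exactly as in the paper's Lemma \ref{Estim_general_For_u^pLemmaParabolico1}), feeding in time-uniform $L^r$ bounds on $\nabla v$ and $\nabla w$ obtained from mass conservation via heat-semigroup/parabolic regularity (the paper's Lemma \ref{LocalV}, whose cited proofs are precisely the smoothing estimates you describe), and closing via Gagliardo--Nirenberg absorption into the dissipation, with the thresholds $k,l\le\frac1n$ and $k,l<\frac1n+\frac{2}{n^2+4}$ arising from the same exponent bookkeeping the paper carries out with its function $\phi(p,\rho,\sigma)$ and the requirement $\sigma>\frac{n^2+4}{2}$. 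The only cosmetic differences are that the paper splits $\int_\Omega u^p|\nabla v|^2$ by a second Young step into $\int_\Omega|\nabla v|^r+\int_\Omega u^{\frac{pr}{r-2}}$ where you use H\"older, and that no multi-step bootstrap is needed: the single pass from the $L^1$ datum suffices.
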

\begin{theorem}\label{MainTheorem3}
Let $\Omega$ be a smooth and bounded domain of $\mathbb{R}^n$, with $n\geq 2$, and $\chi, \xi$ positive. Moreover, for some $\alpha,\gamma_0,\gamma_1>0$, let $f$ and $g$ comply with hypotheses \eqref{f}, for $k<l$ or for $k=l$, provided that $\Theta_0:=\chi \alpha - \xi \gamma_0$ complies with either $\Theta_0<0$, or 
$\Theta_0 \geq 0$ and $l=k\in\left(0,\frac{2}{n}\right)$. Then for any initial data $u_0\in W^{1,\infty}(\bar{\Omega})$, with $u_0\geq 0$ on $\bar{\Omega}$, problem \eqref{problemConFunzioniTempo} admits a unique global and uniformly bounded classical solution. 
\end{theorem}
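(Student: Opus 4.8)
The plan is to run the same $L^p$–energy scheme used for the local parabolic–elliptic–elliptic problem in the \proofMain, taking advantage of the fact that in \eqref{problemConFunzioniTempo} the two elliptic equations furnish the pointwise identities $\Delta v=\tfrac{1}{|\Omega|}\int_\Omega f(u)-f(u)$ and $\Delta w=\tfrac{1}{|\Omega|}\int_\Omega g(u)-g(u)$, so that no separate estimate on $v$ or $w$ is required. First I would invoke the local existence together with the extensibility criterion (if $\TM<\infty$ then $\limsup_{t\to \TM}\lVert u(\cdot,t)\rVert_{L^\infty(\Omega)}=\infty$), and record the mass conservation $\int_\Omega u(\cdot,t)=\int_\Omega u_0=:m$ valid on $(0,\TM)$, which follows by integrating the first equation and using the zero–flux condition. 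The whole task then reduces to a uniform-in-time bound for $\int_\Omega u^p$ with $p$ large, after which a bootstrap delivers the $L^\infty$ bound.

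For fixed $p>1$ I would test the first equation with $u^{p-1}$, integrate by parts and insert the two identities above; after bounding $f(u)\le\alpha u^k$ and discarding the favorably signed nonlocal attractive contribution, this yields a differential inequality of the schematic form
\begin{equation*}
\frac{1}{p}\frac{d}{dt}\int_\Omega u^p+\frac{4(p-1)}{p^2}\int_\Omega\lvert\nabla u^{p/2}\rvert^2\le \frac{\chi\alpha(p-1)}{p}\int_\Omega u^{p+k}-\frac{\xi(p-1)}{p}\int_\Omega u^p g(u)+\frac{\xi(p-1)}{p|\Omega|}\Big(\int_\Omega u^p\Big)\Big(\int_\Omega g(u)\Big).
\end{equation*}
Here the local attractive term is controlled by $\int_\Omega u^{p+k}$, the local repulsive one dominates $\int_\Omega u^{p+l}$ with a minus sign through $g(u)\ge\gamma_0 u^l$, and the only genuinely dangerous contribution is the nonlocal repulsive term $\tfrac{\xi(p-1)}{p|\Omega|}(\int_\Omega u^p)(\int_\Omega g(u))$.

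The decisive step, and the point I expect to be the main obstacle, is to show that this nonlocal repulsive term is of strictly lower order. A crude Hölder estimate only gives $(\int_\Omega u^p)(\int_\Omega g(u))\lesssim\int_\Omega u^{p+l}$ with a constant that cannot be beaten by the local repulsion when $\gamma_1>\gamma_0$; this is exactly why one cannot simply compare coefficients. Instead I would exploit the conserved mass as the low endpoint of an interpolation: writing $\int_\Omega g(u)\le C(1+\int_\Omega u^l)$ and estimating both $\int_\Omega u^p$ and $\int_\Omega u^l$ between $L^1(\Omega)$ and $L^{p+l}(\Omega)$, the Gagliardo–Nirenberg/interpolation inequalities give
\begin{equation*}
\frac{\xi(p-1)}{p|\Omega|}\Big(\int_\Omega u^p\Big)\Big(\int_\Omega g(u)\Big)\le \varepsilon\int_\Omega u^{p+l}+C(\varepsilon,m,p),
\end{equation*}
since the resulting power of $\int_\Omega u^{p+l}$ is strictly below $1$ (it equals $\tfrac{p+l-2}{p+l-1}$ when $l\ge1$, and is even smaller when $l<1$). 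With this reduction the three cases close. When $k<l$, Young's inequality gives $\int_\Omega u^{p+k}\le\varepsilon\int_\Omega u^{p+l}+C$, so the local repulsion $-\tfrac{\xi\gamma_0(p-1)}{p}\int_\Omega u^{p+l}$ absorbs both the attractive and the nonlocal terms, leaving a negative leading coefficient. When $k=l$ and $\Theta_0<0$, the two local terms combine into $\tfrac{(p-1)}{p}\Theta_0\int_\Omega u^{p+k}$, and the residual $\varepsilon\int_\Omega u^{p+k}$ is absorbed while keeping the coefficient negative. When $k=l\in(0,2/n)$ and $\Theta_0\ge0$ the local terms only add up to a nonnegative multiple of $\int_\Omega u^{p+k}$, but the subcriticality $k<2/n$ lets the Gagliardo–Nirenberg inequality absorb $\int_\Omega u^{p+k}$ into the dissipation $\int_\Omega\lvert\nabla u^{p/2}\rvert^2$, again with the mass serving as the lower-order norm. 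In each case one reaches $\tfrac{d}{dt}\int_\Omega u^p+c\int_\Omega u^p\le C$ with $c,C>0$ independent of $t$, hence $\sup_t\int_\Omega u^p<\infty$ for every such $p$.

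Finally, choosing $p$ large enough (say $p>n\max\{k,l\}$) guarantees $f(u),g(u)\in L^\infty((0,\TM);L^q(\Omega))$ for some $q>n$; applying elliptic $W^{2,q}$ regularity to the second and third equations and the embedding $W^{2,q}\hookrightarrow W^{1,\infty}$ then yields $\nabla v,\nabla w\in L^\infty((0,\TM);L^\infty(\Omega))$ uniformly. With the drift terms bounded, a Moser-type iteration (or the smoothing action of the Neumann heat semigroup) on the first equation upgrades the uniform $L^p$ bounds to a uniform $L^\infty$ bound on $u$. By the extensibility criterion this forces $\TM=\infty$, and the uniform boundedness of $u$, and consequently of $v$ and $w$ via the elliptic equations, follows.
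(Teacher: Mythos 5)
Your proposal is correct and reaches the theorem along the paper's general scheme---testing with $u^{p-1}$, discarding the favorably signed nonlocal attractive term, mass conservation \eqref{massConservation}, Gagliardo--Nirenberg, and the boundedness criterion \eqref{BoundednessCriterio}---but it diverges from the paper's proof (Lemma \ref{Estim_general_For_u^pLemmaEllitticoTempo}) precisely at the step you single out as decisive, namely the nonlocal repulsive term. For $l>1$ the paper estimates $\int_\Omega u^l\int_\Omega u^p$ by a double H\"{o}lder inequality against an auxiliary exponent $q>l$, i.e. $\int_\Omega u^l\int_\Omega u^p\le c\bigl[\bigl(\int_\Omega u^q\bigr)^{\frac pq+1}\bigr]^{\frac{l}{p+q}}\bigl(\int_\Omega u^{p+l}\bigr)^{\frac{p}{p+l}}$, then invokes the three-term inequality \eqref{LemmaEsponenti}, and finally absorbs the resulting $\bigl(\int_\Omega u^q\bigr)^{\frac{p+q}{q}}$ into the dissipation $\int_\Omega|\nabla u^{\frac p2}|^2$ as in \eqref{GN11}; this is what forces its hypothesis $p>\max\{\frac qn(qn-2),\frac n2\}$. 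You instead interpolate both $\int_\Omega u^p$ and $\int_\Omega u^l$ between $L^1(\Omega)$ (the conserved mass) and $L^{p+l}(\Omega)$, obtaining the product bounded by $C(m,p)\bigl(\int_\Omega u^{p+l}\bigr)^{\frac{p+l-2}{p+l-1}}$, and close with Young's inequality: purely algebraic, no auxiliary $q$, no recourse to the gradient term for this piece, valid for every large $p$, and unifying the cases $l\le 1$ and $l>1$. Both routes are sound; yours is leaner here, while the paper's recycles machinery (\eqref{LemmaEsponenti}, \eqref{GN11}) already deployed in its other lemmas. Two small points: your parenthetical that the exponent is ``even smaller'' when $l<1$ is backwards---there one gets $\frac{p-1}{p+l-1}>\frac{p+l-2}{p+l-1}$---but harmless, since all that matters is that both are strictly below $1$; and in the case $k=l\in\left(0,\frac 2n\right)$ with $\Theta_0\ge0$, note that the $\varepsilon\int_\Omega u^{p+l}$ produced by your interpolation has no negative local term left to absorb it, so it too must be sent through Gagliardo--Nirenberg into the dissipation (as the paper does in \eqref{GN2}); your text does this implicitly and correctly.
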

\begin{remark}[Solving an open question and improvements of known results]
Let us give these comments:
\begin{itemize}
\item [$\rhd$] After \cite[Theorem 1.2]{LiuLi-2021NLARWA}, precisely in \cite[Remark 1.2]{LiuLi-2021NLARWA}, the authors leave an open question by textually writing:  ``we still have no ideas for the behavior of the solution in the case $k > \frac{2}{n}$ and $k<l$.''. The statement of Theorem \ref{MainTheorem3} provides the answer to this question and these two theorems give a more complete picture on the behavior of solutions to model \eqref{problemConFunzioniTempo}.
\item [$\rhd$] The condition $l>k>0$ derived in Theorem \ref{MainTheorem}, and yielding boundedness to model \eqref{problem}, improves that in \cite[Theorem 4.4]{ViglialoroMatNacAttr-Repul}.  
\end{itemize}
\end{remark}
\begin{remark}\label{RemarkUnifBounCalsSol}
 As usual in the nomenclature, in chemotaxis models a global and uniformly bounded classical solution to problem \eqref{problem} is a triplet  of nonnegative functions 
\begin{align}\label{ClassicalAndGlobability}
	\begin{split}
		u, v, w\in C^0(\bar{\Omega}\times [0,\infty))\cap  C^{2,1}(\bar{\Omega}\times (0,\infty)) 
		\cap L^\infty((0, \infty);L^{\infty}(\Omega)).
	\end{split}
\end{align}
\end{remark}
The rest of the paper is structured as follows. First, in $\S$\ref{LocalSol}  we recall some hints concerning the local existence and uniqueness of a classical solution to model \eqref{problem} and some of its main properties. In this same section we give a \textit{boundedness criterion}, establishing globality and boundedness of local solutions from proper \textit{a priori} $L^p$-boundedness, which is achieved in $\S$\ref{EstimatesAndProofSection} by relying on preliminaries collected in $\S$\ref{PreliminariesSection}. In the same $\S$\ref{EstimatesAndProofSection} we can deduce the claims of Theorems \ref{MainTheorem}, \ref{Main1Theorem} and \ref{MainTheorem3}.  Finally,
the theoretical results presented here are numerically investigated in planar domains in $\S$\ref{SectionSimulations}. 
\section{Existence of local-in-time solutions and main properties}\label{LocalSol}
\textit{From now on we will tacitly assume that all the appearing constants below $c_i$, $i=1,2,\ldots,$ are positive.} 

Once $\Omega$, $\chi,\xi$ and $f, g$ are fixed and obey what hypothesized above, $(u, v, w)$ will stand for the classical and nonnegative solution to problem \eqref{problem} (or \eqref{problemConFunzioniTempo}), defined for all $(x,t) \in \bar{\Omega}\times [0,T_{max})$, for some finite $T_{max}$, and emanating from nonnegative initial data $u_0, \tau v_0,  \tau w_0 \in W^{1,\infty}(\Omega)$. In particular, $u$, $v$ and $w$ are such that 
\begin{equation}\label{massConservation}
	\hspace{-0.1cm}
	\int_\Omega u(x, t)dx = \int_\Omega u_0(x)dx= m \textrm{ on }  (0,\TM), 
\end{equation}
and 
\begin{equation}\label{PropertiesfAndg}
f(u)\in L^\infty((0,\TM);L^\frac{1}{k}(\Omega)) \quad \textrm{and}\quad g(u)\in L^\infty((0,\TM);L^\frac{1}{l}(\Omega)), \textrm{ whenever }k,l\in (0,1].
\end{equation}
Further, globality and boundedness of $(u,v,w)$ (in  the sense of \eqref{ClassicalAndGlobability}) are ensured whenever (\textit{boundedness criterion}, below)  $u\in L^\infty((0,T_{max});L^p(\Omega))$, with some $p>1$ large, and uniformly with respect $t\in (0,T_{max})$: formally, 
\begin{equation}\label{BoundednessCriterio}
	\begin{array}{c}
		\textrm{If } \exists  \; L>0, p>\frac{n}{2}\; \Big| \; \displaystyle \int_\Omega u^p \leq L  \textrm{ on }  (0,\TM)\Rightarrow (u,v,w) \in (L^\infty((0, \infty);L^{\infty}(\Omega)))^3.
	\end{array}
\end{equation}
Since the arguments concerning local existence and boundedness criterion are standard, we do not justify them nor dedicate any lemma; details are achievable in \cite{TaoWanM3ASAttrRep} and \cite[Appendix A.]{TaoWinkParaPara}. 

Conversely, the mass conservation property \eqref{massConservation} comes from an integration over $\Omega$ of the first equation in problem \eqref{problem}. On the other hand, from this bound  we have \eqref{PropertiesfAndg} by noticing that
\begin{equation}\label{PropertiesfAndgConCostanti}
\int_\Omega f(u)^\frac{1}{k}\leq \alpha^\frac{1}{k}\int_\Omega u=\const{a} \quad \textrm{ and similarly } \int_\Omega g(u)^\frac{1}{l}\leq \const{za} \;  \textrm{ for all }   t\in (0,\TM).
\end{equation}
Inclusions \eqref{PropertiesfAndg}, as well as relations \eqref{PropertiesfAndgConCostanti}, will be of crucial importance when we analyze properties of solutions to the fully parabolic model \eqref{problem}. In particular, parabolic regularity theory allows to derive this result, which provides some uniform bounds for $\|v(\cdot,t)\|_{W^{1,r}(\Omega)}$ and $\|w(\cdot,t)\|_{W^{1,s}(\Omega)}$, with $r, s\geq 1$.
\begin{lemma}\label{LocalV}   
We have that $v$ and $w$ are such that
\begin{equation*}\label{Cg}
\int_\Omega |\nabla v(\cdot, t)|^r\leq \const{b} \quad \textrm{on } \,  (0,\TM)
\begin{cases}
\; \textrm{for all } r \in [1,\infty) & \textrm{if } k \in \left(0, \frac{1}{n}\right],\\
\;  \textrm{for all } r \in \left[1, \frac{n}{nk-1}\right) & \textrm{if } k \in \left(\frac{1}{n},1\right],
\end{cases}  
\end{equation*}
and
\begin{equation*}\label{Cg1}
\int_\Omega |\nabla w(\cdot, t)|^s\leq \const{c} \quad \textrm{on } \,  (0,\TM)
\begin{cases}
\; \textrm{for all } s \in [1,\infty) & \textrm{if } l \in \left(0, \frac{1}{n}\right],\\
\;  \textrm{for all } s \in \left[1, \frac{n}{nl-1}\right) & \textrm{if } l \in \left(\frac{1}{n},1\right].
\end{cases}  
\end{equation*}
\begin{proof}
The proof takes into account inclusions \eqref{PropertiesfAndg} and, depending on $\tau$, it is consequence of elliptic or parabolic regularity results; see, for instance, \cite{BrezisBook} and \cite{LSUBookInequality,HorstWink}.
\end{proof}
\end{lemma}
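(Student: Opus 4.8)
The plan is to derive both statements from the single integrability input already recorded in \eqref{PropertiesfAndgConCostanti}, namely that $f(u)\in L^\infty((0,\TM);L^{1/k}(\Omega))$ and $g(u)\in L^\infty((0,\TM);L^{1/l}(\Omega))$ as soon as $k,l\in(0,1]$, and then to feed this into the second and third equations of \eqref{problem}. Since those equations decouple (each involves only $u$ as a source), it suffices to treat $v$; the bound for $w$ follows verbatim upon replacing $(\beta,k,r,f)$ by $(\delta,l,s,g)$. The only genuine distinction is whether $\tau=0$, so that $v$ solves an elliptic problem, or $\tau=1$, so that $v$ solves a parabolic one.

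For $\tau=0$ the second equation reads $-\Delta v+\beta v=f(u)$, with, at each fixed $t$, a right-hand side bounded in $L^{q}(\Omega)$ uniformly in time, where $q:=\tfrac1k\ge 1$. Standard elliptic $L^p$-regularity under homogeneous Neumann conditions then gives $v(\cdot,t)\in W^{2,q}(\Omega)$ with $\|v(\cdot,t)\|_{W^{2,q}(\Omega)}\le c\,\|f(u(\cdot,t))\|_{L^{q}(\Omega)}$, hence a uniform bound on $(0,\TM)$. Applying the Sobolev embedding to $\nabla v\in W^{1,q}(\Omega)$ closes the argument: if $q<n$, i.e.\ $k>\tfrac1n$, then $W^{1,q}(\Omega)\hookrightarrow L^{nq/(n-q)}(\Omega)$ and the elementary identity $\tfrac{nq}{n-q}=\tfrac{n}{nk-1}$ yields $\nabla v\in L^{r}$ for every $r<\tfrac{n}{nk-1}$; if instead $q\ge n$, i.e.\ $k\le\tfrac1n$, the same embedding gives $\nabla v\in L^{r}$ for all finite $r$. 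These are exactly the two claimed ranges (for $r$ below $q$ one may simply invoke $L^{q}(\Omega)\hookrightarrow L^{r}(\Omega)$ on the bounded domain).

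For $\tau=1$ the idea is to use the variation-of-constants representation
\[
v(\cdot,t)=e^{t(\Delta-\beta)}v_0+\int_0^t e^{(t-\sigma)(\Delta-\beta)}\,f(u(\cdot,\sigma))\,d\sigma,
\]
and to apply the $L^{q}$--$L^{r}$ smoothing estimates for the gradient of the Neumann heat semigroup, of the form $\|\nabla e^{t(\Delta-\beta)}z\|_{L^{r}(\Omega)}\le c\,\bigl(1+t^{-\frac12-\frac n2(\frac1q-\frac1r)}\bigr)e^{-\beta t}\,\|z\|_{L^{q}(\Omega)}$. Choosing $z=f(u)$ and $q=\tfrac1k$ and invoking \eqref{PropertiesfAndgConCostanti}, the gradient of the Duhamel term is dominated by $c\int_0^t\bigl(1+(t-\sigma)^{-\theta}\bigr)e^{-\beta(t-\sigma)}\,d\sigma$ with $\theta:=\tfrac12+\tfrac n2\bigl(k-\tfrac1r\bigr)$, while the contribution of $e^{t(\Delta-\beta)}v_0$ is harmless because $v_0\in W^{1,\infty}(\Omega)$. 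This integral is bounded uniformly in $t\in(0,\TM)$ precisely when the singularity at $\sigma=t$ is integrable, that is $\theta<1$; rearranging, $\theta<1$ reads $\tfrac n r>nk-1$, hence $r<\tfrac{n}{nk-1}$ when $k>\tfrac1n$ and every finite $r$ when $k\le\tfrac1n$, while the weight $e^{-\beta(t-\sigma)}$ takes care of the tail.

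I expect the parabolic case to be the main obstacle: one must calibrate the smoothing exponent in the semigroup estimate so that its power of $(t-\sigma)$ is compatible with the targeted $L^{r}$-norm of $\nabla v$, and then verify the uniform-in-$t$ integrability of the resulting kernel $(t-\sigma)^{-\theta}$ up to $\sigma=t$; it is precisely this exponent bookkeeping that produces the sharp threshold $\tfrac{n}{nk-1}$. The elliptic case, by contrast, is essentially routine, amounting to one application of elliptic regularity followed by a Sobolev embedding.
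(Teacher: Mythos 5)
Your strategy coincides with what the paper intends: its proof is a bare citation to elliptic regularity (for $\tau=0$) and to parabolic regularity / Neumann heat-semigroup estimates (for $\tau=1$), applied to the uniform-in-time bounds \eqref{PropertiesfAndg}, and your two cases fill in exactly those arguments with the correct exponent bookkeeping (the identity $\frac{nq}{n-q}=\frac{n}{nk-1}$ for $q=\frac1k$ in the elliptic case, and the integrability condition $\theta<1\iff \frac nr>nk-1$ in the Duhamel estimate, which reproduce precisely the two claimed ranges and the dichotomy at $k=\frac1n$).

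One step, however, fails as written: in the elliptic case you invoke $W^{2,q}$ Calder\'on--Zygmund regularity with $q=\frac1k$, but the lemma allows $k=1$ (and $l=1$ for $w$), i.e.\ $q=1$, and $L^1\to W^{2,1}$ elliptic regularity is false in general ($L^p$ elliptic theory requires $1<p<\infty$). The conclusion is still true at this endpoint --- for $L^1$ data one indeed has $\nabla v\in L^r$ for every $r<\frac{n}{n-1}$ --- but it needs a different justification: either a duality or approximation argument in the spirit of the theory for measure data, or, more economically within your own framework, represent the solution of $-\Delta v+\beta v=f(u)$ by the resolvent formula $v=\int_0^\infty e^{-\beta s}e^{s\Delta}f(u)\,ds$ and apply the very same gradient $L^q$--$L^r$ smoothing estimates you use in the parabolic case; these are valid for $q=1$, and the factor $e^{-\beta s}$ together with $\theta<1$ gives the uniform bound on exactly the claimed range. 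With this repair (and the trivial remark, which you already make, that for $r<q$ one uses the boundedness of $\Omega$), the proof is complete for both values of $\tau$.
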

\section{Some preparatory tools}\label{PreliminariesSection}
In this section we collect some inequalities and further necessary results. 
\begin{lemma}\label{BoundsInequalityLemmaTecnicoYoung}  
Let $A,B \geq 0$, $d_1, d_2>0$ and $p>1$. Then for some $d, d_3>0$ we have 
\begin{equation}\label{InequalityForFinallConclusion}
A^{d_1}+B^{d_2}\geq 2^{-d}(A+B)^{d}-d_3, 
\end{equation}
and
\begin{equation}\label{InequalityA+BToPowerP}
(A+B)^p \leq 2^{p-1}(A^p+B^p).
\end{equation}
Moreover, let $d_4,d_5 >0$ be such that $d_4+d_5<1$. Then for all  
$\epsilon>0$ 
\begin{equation}\label{LemmaEsponenti}
A^{d_4}B^{d_5} \leq \epsilon(A+B)+\const{d}.
\end{equation}
\begin{proof}
The proofs are available, respectively, in \cite[Lemma 3.3]{MarrasViglialoroMathNach}, \cite[Theorem 1]{Jameson_2014Inequality} and \cite[Lemma 4.3]{frassuviglialoro1}.
\end{proof}
\end{lemma}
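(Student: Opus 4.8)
The plan is to establish the three inequalities by elementary convexity and Young-type arguments, treating them essentially independently. I would begin with \eqref{InequalityA+BToPowerP}, the simplest: since $t\mapsto t^p$ is convex for $p>1$, Jensen's inequality gives $\left(\tfrac{A+B}{2}\right)^p\le\tfrac{A^p+B^p}{2}$, and multiplying through by $2^p$ yields $(A+B)^p\le 2^{p-1}(A^p+B^p)$ at once, with no case distinction required.

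For the reverse estimate \eqref{InequalityForFinallConclusion} I would set $d:=\min\{d_1,d_2\}>0$. The first observation is the pointwise bound $A^{d_1}\ge A^{d}-1$, valid for every $A\ge0$: indeed, for $A\ge1$ one has $A^{d_1}\ge A^{d}$ because $d_1\ge d$, while for $0\le A<1$ the right-hand side is already negative; the same applies to $B$. Summing, $A^{d_1}+B^{d_2}\ge A^{d}+B^{d}-2$. It then remains to compare $A^{d}+B^{d}$ with $(A+B)^{d}$, and here I would split according to whether $d\ge1$ or $d<1$: in the convex regime ($d\ge1$) Jensen gives $A^{d}+B^{d}\ge 2^{1-d}(A+B)^{d}\ge 2^{-d}(A+B)^{d}$, while in the concave/subadditive regime ($0<d<1$) the elementary subadditivity $(A+B)^{d}\le A^{d}+B^{d}$ already yields $A^{d}+B^{d}\ge(A+B)^{d}\ge 2^{-d}(A+B)^{d}$. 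Combining the two steps, \eqref{InequalityForFinallConclusion} holds with this $d$ and $d_3=2$.

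Finally, for \eqref{LemmaEsponenti} I would exploit the hypothesis $d_4+d_5<1$ in two stages. Since $A\le A+B$ and $B\le A+B$ and both exponents are positive, monotonicity of the power functions gives $A^{d_4}B^{d_5}\le(A+B)^{d_4}(A+B)^{d_5}=(A+B)^{d_4+d_5}$, which reduces the problem to bounding a \emph{sublinear} power $(A+B)^{s}$ with $s:=d_4+d_5\in(0,1)$. For such a power I would invoke the standard Young inequality in the form $t^{s}\le\epsilon t+C(\epsilon)$ for all $t\ge0$, applied with $t=A+B$; this is obtained by writing $t^{s}=t^{s}\cdot 1$ and splitting with the conjugate exponents $1/s$ and $1/(1-s)$, after which the free parameter is tuned so that the coefficient of $t$ equals the prescribed $\epsilon$, with $C(\epsilon)$ playing the role of the residual additive constant.

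Since each step reduces to a one-line convexity or Young estimate, I do not expect a genuine obstacle; the only point requiring slight care is \eqref{InequalityForFinallConclusion}, because it is a \emph{lower} bound on a sum of powers with unrelated exponents $d_1,d_2$ that may lie on either side of $1$, so the argument must accommodate both the convex and the subadditive behaviour of $t\mapsto t^{d}$ simultaneously. The choice $d=\min\{d_1,d_2\}$, together with the slack absorbed into the additive constant $d_3$, is precisely what makes the estimate uniform over all $A,B\ge0$.
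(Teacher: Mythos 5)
Your proof is correct, but it takes a genuinely different route from the paper: the paper does not argue the lemma at all, it disposes of each inequality by citation --- \cite[Lemma 3.3]{MarrasViglialoroMathNach} for \eqref{InequalityForFinallConclusion}, \cite[Theorem 1]{Jameson_2014Inequality} for \eqref{InequalityA+BToPowerP}, and \cite[Lemma 4.3]{frassuviglialoro1} for \eqref{LemmaEsponenti} --- whereas you give a self-contained elementary proof. Your Jensen argument for \eqref{InequalityA+BToPowerP} is the standard one and recovers the sharp constant $2^{p-1}$. For \eqref{InequalityForFinallConclusion}, the choice $d=\min\{d_1,d_2\}$, the pointwise bound $A^{d_1}\ge A^{d}-1$, and the split between the convex regime $d\ge 1$ (Jensen, constant $2^{1-d}\ge 2^{-d}$) and the subadditive regime $0<d<1$ (where $(A+B)^{d}\le A^{d}+B^{d}\,$) are all sound, and in fact deliver more than the statement asks, namely the explicit constants $d=\min\{d_1,d_2\}$ and $d_3=2$ rather than mere existence of some $d,d_3>0$. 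For \eqref{LemmaEsponenti}, the reduction $A^{d_4}B^{d_5}\le (A+B)^{d_4+d_5}$ followed by the $\epsilon$-Young estimate $t^{s}\le\epsilon t+C(\epsilon)$ with $s=d_4+d_5\in(0,1)$ is precisely the content of the cited lemma of Frassu--Viglialoro. What your approach buys is independence from the literature and explicit, checkable constants; what the paper's citations buy is brevity. I see no gaps.
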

\begin{lemma}\label{EllipticEhrlingSystemLemma} 
Let $\Omega\subset \R^n$, $n\geq 1$, be a bounded and smooth domain and $\delta>0$. Then for any nonnegative $g\in C^1(\bar{\Omega})$, the solution $0\leq \psi\in C^{2,\kappa}(\bar{\Omega})$, $0<\kappa<1$, of the problem
\begin{equation*}
\begin{cases}
0=\Delta \psi+g-\delta \psi & \textrm{in } \Omega,\\
\psi_{\nu}=0 & \textrm{on } \partial \Omega,
\end{cases}
\end{equation*}
has the following property: For any $\hat{c},\sigma>0$ and  $\overline{p}\in(1,\infty)$, there exists $\tilde{c}=\tilde{c}(\sigma,\overline{p}) >0$ such that 
\begin{equation}\label{EhrlingTypeInequalityWithMass} 
\hat{c}\int_\Omega \psi^{\overline{p}+1} \leq \sigma  \int_\Omega g^{\overline{p}+1} +\frac{\tilde{c}}{|\Omega|^{\overline{p}}}\Big( \int_\Omega g\Big)^{\overline{p}+1}.\end{equation}
\end{lemma}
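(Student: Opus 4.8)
The plan is to prove the Ehrling-type inequality \eqref{EhrlingTypeInequalityWithMass} by combining elliptic regularity for the Neumann problem with a trick that separates out the zero-mean part of $g$. First I would set $\bar{g} := \frac{1}{|\Omega|}\int_\Omega g$ and write $g = (g - \bar{g}) + \bar{g}$, so that the source in the elliptic problem splits into a mean-zero fluctuation and a constant. Correspondingly I would decompose $\psi = \psi_1 + \psi_2$, where $\psi_2 \equiv \bar{g}/\delta$ solves $0 = -\delta\psi_2 + \bar{g}$ (the spatially constant solution) and $\psi_1$ solves $0 = \Delta\psi_1 + (g-\bar{g}) - \delta\psi_1$ with homogeneous Neumann data. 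The point of this split is that $\psi_1$ is governed by a mean-zero forcing, for which standard $L^{\overline{p}+1}$ elliptic estimates give $\|\psi_1\|_{L^{\overline{p}+1}} \le c\,\|g-\bar{g}\|_{L^{\overline{p}+1}}$, while $\psi_2$ contributes exactly the nonlocal mean term $\frac{\tilde c}{|\Omega|^{\overline{p}}}\bigl(\int_\Omega g\bigr)^{\overline{p}+1}$ after raising to the power $\overline{p}+1$ and integrating.

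Next I would estimate $\int_\Omega \psi^{\overline{p}+1}$ using \eqref{InequalityA+BToPowerP} to bound $(\psi_1+\psi_2)^{\overline{p}+1} \le 2^{\overline{p}}(\psi_1^{\overline{p}+1} + \psi_2^{\overline{p}+1})$. The $\psi_2$ piece integrates to $2^{\overline{p}}|\Omega|\,(\bar g/\delta)^{\overline{p}+1} = 2^{\overline{p}}\delta^{-(\overline{p}+1)}|\Omega|^{-\overline{p}}\bigl(\int_\Omega g\bigr)^{\overline{p}+1}$, which already has exactly the structure of the second term on the right of \eqref{EhrlingTypeInequalityWithMass}. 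For the $\psi_1$ piece, the elliptic $L^q$ estimate yields $\int_\Omega \psi_1^{\overline{p}+1} \le c\int_\Omega |g-\bar{g}|^{\overline{p}+1}$, and then I would apply \eqref{InequalityA+BToPowerP} once more together with Jensen's inequality $\bar{g}^{\overline{p}+1}\le |\Omega|^{-1}\int_\Omega g^{\overline{p}+1}$ to control $\int_\Omega|g-\bar g|^{\overline{p}+1}$ by $c'\int_\Omega g^{\overline{p}+1}$.

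At this stage I would have a bound of the shape $\hat c\int_\Omega \psi^{\overline{p}+1} \le \hat c\, c'' \int_\Omega g^{\overline{p}+1} + \hat c\, c'''|\Omega|^{-\overline{p}}\bigl(\int_\Omega g\bigr)^{\overline{p}+1}$, which is almost \eqref{EhrlingTypeInequalityWithMass} but with a fixed constant $\hat c\,c''$ in front of $\int_\Omega g^{\overline{p}+1}$ rather than an arbitrarily small $\sigma$. The crucial refinement — and I expect this to be the main obstacle — is to replace that fixed constant by a prescribed $\sigma>0$. To achieve this I would sharpen the estimate on $\psi_1$ via an Ehrling-type (interpolation) argument: using the compactness of the embedding $W^{2,\overline{p}+1}\hookrightarrow\hookrightarrow L^{\overline{p}+1}$ relative to $W^{1,\overline{p}+1}$ (or an interpolation inequality between a higher and a lower norm), one gains a factor that can be made small at the cost of a lower-order term controlled again by $\int_\Omega g$ (a first-power, hence mean, quantity). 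This is precisely where the smallness parameter $\sigma$ enters and where the $L^1$-norm of $g$ reappears in the nonlocal term; the delicate point is to ensure that all lower-order remainders collapse into the single mean term $\bigl(\int_\Omega g\bigr)^{\overline{p}+1}$ with a constant $\tilde c$ depending only on $\sigma$ and $\overline{p}$, as claimed. Once this interpolation step is carried out carefully, absorbing the small-coefficient high-order term into $\sigma\int_\Omega g^{\overline{p}+1}$ and collecting the rest into the mean term completes the proof.
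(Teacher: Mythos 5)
Your strategy can be made to work, but as written it has a genuine gap at exactly the step you yourself defer as ``the main obstacle'': the interpolation argument that replaces the fixed constant by an arbitrarily small $\sigma$ is never executed, and the way you set it up would not run. Ehrling's lemma requires a triple $X\hookrightarrow\hookrightarrow Y\hookrightarrow Z$ and yields $\|u\|_{Y}\le\epsilon\|u\|_{X}+C(\epsilon)\|u\|_{Z}$; your choice ``$W^{2,\overline{p}+1}\hookrightarrow\hookrightarrow L^{\overline{p}+1}$ relative to $W^{1,\overline{p}+1}$'' fails the hypotheses ($L^{\overline{p}+1}$ does not embed into $W^{1,\overline{p}+1}$), and the other reading, interpolating $W^{1,\overline{p}+1}$ between $W^{2,\overline{p}+1}$ and $L^{\overline{p}+1}$, is circular for your purpose, since the weak norm would then be the very quantity you want to make small. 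The weak space must be one whose norm is controlled by the mass of $g$ alone, namely $Z=L^{1}(\Omega)$, and the mechanism that makes this work is an identity absent from your sketch: integrating the equation over $\Omega$ with its Neumann condition gives $\delta\int_\Omega\psi=\int_\Omega g$, hence $\|\psi\|_{L^{1}(\Omega)}=\delta^{-1}\int_\Omega g$ because $\psi\ge 0$. With the correct triple $W^{2,\overline{p}+1}(\Omega)\hookrightarrow\hookrightarrow L^{\overline{p}+1}(\Omega)\hookrightarrow L^{1}(\Omega)$, elliptic regularity $\|\psi\|_{W^{2,\overline{p}+1}(\Omega)}\le C_{E}\|g\|_{L^{\overline{p}+1}(\Omega)}$, and \eqref{InequalityA+BToPowerP}, the lemma then follows at once:
\[
\|\psi\|_{L^{\overline{p}+1}(\Omega)}\le\epsilon C_{E}\|g\|_{L^{\overline{p}+1}(\Omega)}+\frac{C(\epsilon)}{\delta}\int_\Omega g,
\]
so that
\[
\hat{c}\int_\Omega\psi^{\overline{p}+1}\le\hat{c}\,2^{\overline{p}}(\epsilon C_{E})^{\overline{p}+1}\int_\Omega g^{\overline{p}+1}+\hat{c}\,2^{\overline{p}}\left(\frac{C(\epsilon)}{\delta}\right)^{\overline{p}+1}\Big(\int_\Omega g\Big)^{\overline{p}+1},
\]
and one concludes by fixing $\epsilon$ so small that $\hat{c}\,2^{\overline{p}}(\epsilon C_{E})^{\overline{p}+1}\le\sigma$ and setting $\tilde{c}:=\hat{c}\,2^{\overline{p}}(C(\epsilon)/\delta)^{\overline{p}+1}|\Omega|^{\overline{p}}$. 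Note that this makes your decomposition $\psi=\psi_{1}+\psi_{2}$ superfluous, and in fact it hides a trap: $\psi_{1}$ has zero mean but changes sign, so $\|\psi_{1}\|_{L^{1}}$ is \emph{not} $\delta^{-1}\int_\Omega(g-\bar{g})=0$; to control it you would have to go back through $\psi$ and $\psi_{2}$ anyway, which is why applying the interpolation directly to $\psi$ is the right move.

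For comparison: the paper itself does not prove this lemma but refers to \cite[Lemma 3.1]{ViglialoroMatNacAttr-Repul} (see also \cite[Lemma 2.2]{WinklerHowFar}), where the inequality is obtained by a normalization--compactness--contradiction argument: one assumes failure along sequences, normalizes $\|\psi_{j}\|_{L^{\overline{p}+1}(\Omega)}=1$, uses the uniform $W^{2,\overline{p}+1}$ bound and the Rellich--Kondrachov theorem to extract a limit solving the homogeneous problem, and reaches a contradiction. That is precisely the proof of Ehrling's lemma specialized to this elliptic problem, so your route, once repaired as above, is the same mechanism with the compactness packaged into a standard black-box lemma; it is a legitimate and arguably cleaner presentation, but the repair (correct space triple plus the mass identity $\delta\int_\Omega\psi=\int_\Omega g$) is indispensable.
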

\begin{proof}
A detailed proof of \eqref{EhrlingTypeInequalityWithMass} can be found in \cite[Lemma 3.1]{ViglialoroMatNacAttr-Repul}. (See also \cite[Lemma 2.2]{WinklerHowFar}.)
\end{proof}
\section{A priori estimates and proof of the  theorems}\label{EstimatesAndProofSection}
Let $(u,v,w)$ be the local classical solution to models \eqref{problem} and \eqref{problemConFunzioniTempo}. Whenever we show that $u\in L^\infty((0,\TM);L^p(\Omega))$, for some $p>\frac{n}{2}$,  we can take advantage from the boundedness criterion given in \eqref{BoundednessCriterio} and directly obtain that, indeed,  $u\in L^\infty((0,\infty);L^\infty(\Omega))$; at this point, regularity theories applied to the equations of $v$ and $w$ entail that also $v,w$ belong to $L^\infty((0,\infty);L^\infty(\Omega)).$

So our goal in this section is showing the inclusion $u\in L^\infty((0,\TM);L^p(\Omega))$, for some $p>\frac{n}{2}$; we will prove this by establishing an absorptive differential inequality for the functional $\displaystyle y(t):=\int_\Omega u^p$. To be precise, we aim at deriving this inequality for $y$ 
\begin{equation}\label{EstimFun}
y'(t)+ \const{ac} \int_\Omega \lvert \nabla u^\frac{p}{2}\rvert^2 \leq  \const{ad}
\quad \textrm{on } (0, T_{max}).
\end{equation}
In fact, we can successively exploit the Gagliardo--Nirenberg inequality (see \cite{Nirenber_GagNir_Ineque}) combined with \eqref{InequalityA+BToPowerP}
(used in the sequel without mentioning), so to write 
\begin{equation*} 
\int_\Omega u^{p}=\lvert \lvert u^\frac{p}{2}\lvert \lvert_{L^2(\Omega)}^2 
\leq \const{ae}  \lvert \lvert\nabla u^\frac{p}{2}\lvert \lvert_{L^2(\Omega)}^{2 \theta} \lvert \lvert u^\frac{p}{2}\lvert \lvert_{L^\frac{2}{p}(\Omega)}^{2(1-\theta)} + \const{ae} \lvert \lvert u^\frac{p}{2}\lvert \lvert^2_{L^\frac{2}{p}(\Omega)} \quad \textrm{ for all } t \in(0,T_{max}),
 \end{equation*}
 with
\[0<\theta=\frac{\frac{np}{2}(1-\frac{1}{p})}{1-\frac{n}{2}+\frac{np}{2}}<1.\]
In turn, by taking into consideration bound \eqref{massConservation}, the above inequality leads to 
\begin{equation}\label{GN3}
\int_\Omega u^{p}\leq \const{ag} \Big(\int_\Omega \lvert \nabla u^\frac{p}{2}\rvert^2\Big)^{\theta}+ \const{ag}\quad \textrm{on } \, (0,T_{max}),
\end{equation}
so that by manipulating it and inserting the result into \eqref{EstimFun}, we can observe also by virtue of \eqref{InequalityForFinallConclusion} that $y$ satisfies this initial problem
\begin{equation*}\label{MainInitialProblemWithM}
\begin{cases}
y'(t)\leq \const{ai} - \const{aj} y^{\frac{1}{\theta}}(t)\quad \textrm{for all } t \in (0,T_{max}),\\
y(0)=\int_\Omega u_0^p.
\end{cases}
\end{equation*}
Consequently, an ODE comparison principle implies that 
$\displaystyle \int_\Omega u^p\leq \max\left\{y(0),\left(\frac{\const{ai}}{\const{aj}}\right)^\theta\right\}:=L$ for all $t\in(0,T_{max})$, providing the desired inclusion.
\subsection*{Analysis of solutions to model \eqref{problem} with $\tau=0$; proof of Theorem \ref{MainTheorem}}
\begin{lemma}\label{Estim_general_For_u^pLemmaEllittico} 
Let either $0<k<l$ or $k,l\in\left(0,\frac{2}{n}\right)$. Then for any $p > \max\left\{l,\frac{l(nl-2)}{n},\frac{n}{2}\right\}$, $u$ fulfills for some $L>0$  
\begin{equation}\label{Up}
\int_\Omega u^p \leq L \quad \textrm{for all}\quad t \in (0,T_{max}).
\end{equation}
Moreover, we have the same conclusion if $k=l$ and $\chi \alpha - \xi \gamma_0<0$, or $l=k\in\left(0,\frac{2}{n}\right)$ and $\chi \alpha - \xi \gamma_0\geq 0.$ 
\begin{proof}
By testing the first equation of problem \eqref{problem} with $u^{p-1}$, using its boundary conditions and taking into account the second and the third equation with $\tau=0$,  we have
\begin{equation*}
\begin{split}
\frac{d}{dt} \int_\Omega u^p &=p \int_\Omega u^{p-1}u_t
= -p (p-1) \int_\Omega u^{p-2} |\nabla u|^2 - (p-1) \chi \int_\Omega u^p (v-f(u)) + (p-1)\xi \int_\Omega u^p (w-g(u))\\
&\leq -p(p-1) \int_\Omega u^{p-2} |\nabla u|^2 + (p-1) \chi \int_\Omega u^p f(u) + (p-1)\xi \int_\Omega u^p w - (p-1)\xi \int_\Omega u^p g(u) \quad \textrm{on }\, (0,\TM).
\end{split}  
\end{equation*}  
Now, we recall the definitions of $f$ and $g$ given in \eqref{f}, exploit $u < (u+1)$ and inequality \eqref{InequalityA+BToPowerP} so to obtain
\begin{equation}\label{Est1}
\begin{split}
\frac{d}{dt} \int_\Omega u^p &\leq -p(p-1) \int_\Omega u^{p-2} |\nabla u|^2 + (p-1)\chi \alpha \int_\Omega u^{p+k} + (p-1)\xi \int_\Omega u^p w 
- (p-1)\xi \gamma_0 \int_\Omega u^p (u+1)^l\\
& \leq -p(p-1) \int_\Omega u^{p-2} |\nabla u|^2 + (p-1)\chi \alpha \int_\Omega u^{p+k} + (p-1)\xi \int_\Omega u^p w - (p-1)\xi \gamma_0 \int_\Omega u^{p+l} \quad \textrm{for all }\, t \in (0,\TM).
\end{split}
\end{equation}
As to the third integral on the right-hand side of \eqref{Est1}, an application of Young's inequality, Lemma \ref{EllipticEhrlingSystemLemma} with $\psi=w$ and 
$\bar{p}=\frac{p}{l}>1$ and bound \eqref{InequalityA+BToPowerP} provide for 
$\epsilon_1, \sigma, \tilde{\sigma}>0$ 
\begin{equation}\label{Est2}
\begin{split}
(p-1)\xi \int_\Omega u^p w &\leq \epsilon_1 \int_{\Omega} u^{p+l} + \const{e} \int_{\Omega} w^{\bar{p}+1}
\leq \epsilon_1 \int_{\Omega} u^{p+l} + \sigma \int_{\Omega} (g(u))^{\overline{p}+1} 
+ \const{f} \left(\int_{\Omega} g(u)\right)^{\overline{p}+1}\\
&\leq \epsilon_1 \int_{\Omega} u^{p+l} + \sigma \int_{\Omega} (\gamma_1 (u+1)^l)^{\overline{p}+1} + \const{f} \left(\int_{\Omega} \gamma_1 (u+1)^l\right)^{\overline{p}+1}\\
&\leq \epsilon_1 \int_{\Omega} u^{p+l} + \tilde{\sigma} \int_{\Omega} u^{l(\bar{p}+1)} + \const{i} \left(\int_{\Omega} u^l\right)^{\overline{p}+1}+ \const{l}
\quad \textrm{on } (0,\TM).
\end{split}
\end{equation}
If $l\in(0,1]$ the last integral in \eqref{Est2} is controlled by a constant thanks Hölder's inequality and the mass conservation property \eqref{massConservation}. On the other hand, if $l>1$ we can estimate the term $\displaystyle \left(\int_{\Omega} u^l\right)^{\bar{p}+1}$ by applying the Gagliardo--Nirenberg inequality, in this way: 
\begin{equation*}
\const{i} \left(\int_{\Omega} u^l \right)^{\frac{p+l}{l}}= \const{i}
 \|u^{\frac{p}{2}}\|_{L^{\frac{2l}{p}}(\Omega)}^{\frac{2(p+l)}{p}} 
\leq \const{o} \|\nabla u^{\frac{p}{2}}\|_{L^2(\Omega)}^{\frac{2(p+l)}{p}\theta_1} \|u^{\frac{p}{2}}\|_{L^{\frac{2}{p}}(\Omega)}^{\frac{2(p+l)}{p}(1-\theta_1)} 
+ \const{o} \|u^{\frac{p}{2}}\|_{L^{\frac{2}{p}}(\Omega)}^{\frac{2(p+l)}{p}} \quad \textrm{ for all } t \in(0,T_{max}),
\end{equation*}
where for $p$ as in our assumptions we have
\[
0<\theta_1=\frac{1-\frac{1}{l}}{1+\frac{2}{np}-\frac{1}{p}}<1 \quad \textrm{ and } \quad 0<\frac{(p+l)}{p}\theta_1 <1.
\]
Hence, by recalling the mass conservation property \eqref{massConservation}, the Young and above inequalities entail for any $\epsilon_2>0$ 
\begin{equation}\label{GN11}
\const{i} \left(\int_\Omega u^l \right)^{\frac{p+l}{l}} \leq \const{r} \Big(\int_\Omega |\nabla u^\frac{p}{2}|^2\Big)^{\frac{(p+l)}{p}\theta_1}+ \const{r}
\leq \epsilon_2 \int_\Omega |\nabla u^\frac{p}{2}|^2  + \const{t} \quad \textrm{on } (0,T_{max}).
\end{equation}
As a consequence, in both case, by putting estimates \eqref{Est2} and \eqref{GN11} into \eqref{Est1} we get for  all $t \in (0,\TM)$
\begin{equation}\label{Est3}
\begin{split}
\frac{d}{dt} \int_\Omega u^p & \leq -p(p-1) \int_\Omega u^{p-2} |\nabla u|^2 + (p-1)\chi \alpha \int_\Omega u^{p+k} 
+(\epsilon_1+\tilde{\sigma}- (p-1)\xi \gamma_0) \int_\Omega u^{p+l} 
+\epsilon_2 \int_\Omega |\nabla u^\frac{p}{2}|^2+ \const{u}.
\end{split}
\end{equation}
Now we analyze the case $0<k<l$ and by applying the Young inequality to the second integral at the right-hand side of the above estimate yields for $\epsilon_3>0$ 
\begin{equation*}
(p-1)\chi\alpha \int_{\Omega} u^{p+k} \leq \epsilon_3 \int_{\Omega} u^{p+l} + \const{aa}
\quad \textrm{for all  } t \in (0,T_{max}),
\end{equation*}
so that, by plugging the latter into \eqref{Est3}, and also in view of the identity
\[
\int_\Omega u^{p-2} |\nabla u|^2 = \frac{4}{p^2} \int_\Omega |\nabla u^{\frac{p}{2}}|^2 \quad \textrm{on  } (0,T_{max}),
\] 
we have
\begin{equation*}\label{EstT}
\begin{split}
\frac{d}{dt} \int_\Omega u^p & \leq \left(\epsilon_2-\frac{4(p-1)}{p}\right) \int_\Omega |\nabla u^{\frac{p}{2}}|^2 
+(\epsilon_1+\tilde{\sigma}+\epsilon_3 - (p-1)\xi \gamma_0) \int_\Omega u^{p+l} + \const{ab} \quad \textrm{on }\, (0,\TM).
\end{split}
\end{equation*}
Finally, by choosing $\epsilon_1, \epsilon_2,\epsilon_3, \tilde{\sigma}$ such that $\epsilon_2  < \frac{4(p-1)}{p}$ and $\epsilon_1+\tilde{\sigma}+\epsilon_3\leq(p-1)\xi\gamma_0$, we arrive at similar inequality as in \eqref{EstimFun}. (Note that in the case $l\in(0,1]$ one can directly take $\epsilon_2=0$.)

Let turn our attention to the case $l,k\in \left(0,\frac{2}{n}\right)$. Thanks to the Gagliardo--Nirenberg inequality and to the mass conservation property \eqref{massConservation}, the estimate of the second integral of the right-hand side of \eqref{Est3} becomes 
\begin{equation*}
\begin{split}
(p-1)\chi \alpha \int_\Omega u^{p+k}&= (p-1)\chi \alpha
\|u^{\frac{p}{2}}\|_{L^{\frac{2(p+k)}{p}}(\Omega)}^{\frac{2(p+k)}{p}}
\leq \const{v} \|\nabla u^{\frac{p}{2}}\|_{L^2(\Omega)}^{\frac{2(p+k)}{p}\theta_2} 
\|u^{\frac{p}{2}}\|_{L^{\frac{2}{p}}(\Omega)}^{\frac{2(p+k)}{p}(1-\theta_2)} 
+ \const{v} \|u^{\frac{p}{2}}\|_{L^{\frac{2}{p}}(\Omega)}^{\frac{2(p+k)}{p}}\\
&\leq \const{j} \left(\int_\Omega |\nabla u^{\frac{p}{2}}|^2\right)^{\frac{(p+k)}{p}\theta_2} + \const{j} 
\quad \textrm{on } (0,\TM),
\end{split}
\end{equation*}
where
\[
\theta_2=\frac{\frac{p}{2} - \frac{p}{2(p+k)}}{\frac{p}{2}+\frac{1}{n}-\frac{1}{2}} \in (0,1).
\]
Since $\frac{(p+k)}{p}\theta_2<1$ for $0<k<\frac{2}{n}$, an application of the Young inequality gives for $\epsilon_4>0$ 
\begin{equation}\label{GN2}
(p-1)\chi \alpha \int_\Omega u^{p+k} \leq \epsilon_4 \int_\Omega |\nabla u^{\frac{p}{2}}|^2 + \const{y} \quad \textrm{for all } t \in (0,\TM).
\end{equation}
By plugging this gained estimate into bound \eqref{Est3},  we reason as above and the arbitrariness of the constants associated to the terms $\int_\Omega |\nabla u^\frac{p}{2}|^2$ and $\int_\Omega u^{p+l}$ allows us to establish for $y(t)$ an estimate as in \eqref{EstimFun}. 

Finally, if we take $k=l$ in \eqref{Est3}, we arrive at the following estimate: 
\begin{equation*}
\frac{d}{dt} \int_\Omega u^p 
\leq -p(p-1) \int_\Omega u^{p-2} |\nabla u|^2 + ((p-1)(\chi \alpha-\xi \gamma_0) + \epsilon_1 +\tilde{\sigma}) \int_\Omega u^{p+l} 
+ \epsilon_2 \int_\Omega \lvert \nabla u^\frac{p}{2}\rvert^2 + \const{u} \quad \textrm{for all }\, t \in (0,\TM).
\end{equation*}
If $\chi \alpha - \xi \gamma_0<0$, for proper constant $\epsilon_1, \tilde{\sigma}$, we can make $(p-1)(\chi \alpha-\xi \gamma_0) + \epsilon_1 +\tilde{\sigma}\leq 0$ and consequently neglect the term associated to
$\displaystyle \int_\Omega u^{p+l}$;  on the other hand if $\chi \alpha - \xi \gamma_0\geq 0$ and $l\in\left(0,\frac{2}{n}\right)$ we may treat such integral similarly to \eqref{GN2}. In both cases we conclude.
\end{proof}
\end{lemma}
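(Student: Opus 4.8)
The plan is to produce for $y(t):=\int_\Omega u^p$ an absorptive differential inequality of the exact shape \eqref{EstimFun}, after which the ODE comparison argument already set up above (together with the Gagliardo--Nirenberg bound \eqref{GN3}) immediately yields the uniform estimate \eqref{Up}. To reach \eqref{EstimFun} I first test the first equation of \eqref{problem} against $u^{p-1}$ and integrate by parts, using the homogeneous Neumann conditions. Since $\tau=0$, the equations for $v$ and $w$ are elliptic and furnish $\Delta v=\beta v-f(u)$ and $\Delta w=\delta w-g(u)$; inserting these after a second integration by parts in the two cross-diffusion terms turns the chemotactic contributions into the dissipation $-p(p-1)\int_\Omega u^{p-2}|\nabla u|^2$ plus the terms coming from $-\int_\Omega u^p\Delta v$ and $+\int_\Omega u^p\Delta w$. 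Discarding the favourable (negative) integral $-\beta\int_\Omega u^p v$ and keeping the rest, and then using \eqref{f} and \eqref{InequalityA+BToPowerP} to bound $f(u)\le\alpha u^k$ from above and $g(u)\ge\gamma_0(1+u)^l\ge\gamma_0 u^l$ from below, I isolate the decisive sign structure: a good repulsive block $-(p-1)\xi\gamma_0\int_\Omega u^{p+l}$ competing with the bad attractive block $+(p-1)\chi\alpha\int_\Omega u^{p+k}$ and with the indefinite coupling $+(p-1)\xi\int_\Omega u^p w$.

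The term $\int_\Omega u^p w$ is where I expect the main obstacle, because the nonnegativity of $w$ makes it a genuinely unfavourable contribution that cannot simply be dropped. My idea is to split it by Young's inequality as $\epsilon_1\int_\Omega u^{p+l}+c\int_\Omega w^{\bar p+1}$ with $\bar p=p/l>1$, and then to feed $\int_\Omega w^{\bar p+1}$ into the elliptic Ehrling-type inequality of Lemma \ref{EllipticEhrlingSystemLemma} with $\psi=w$, which returns $\sigma\int_\Omega g(u)^{\bar p+1}+c(\int_\Omega g(u))^{\bar p+1}$. Using \eqref{f} once more, this becomes $\tilde\sigma\int_\Omega u^{p+l}$, with $\tilde\sigma$ as small as desired, plus the lower-order quantity $(\int_\Omega u^l)^{(p+l)/l}$. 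The latter is harmless: for $l\le1$ it is controlled by a constant via Hölder's inequality and mass conservation \eqref{massConservation}, whereas for $l>1$ a Gagliardo--Nirenberg interpolation, whose exponent remains strictly below $1$ precisely because $p>\max\{l,\frac{l(nl-2)}{n}\}$, recasts it as $\epsilon_2\int_\Omega|\nabla u^{p/2}|^2+c$. The whole purpose of this step is that the repulsive production $-(p-1)\xi\gamma_0\int_\Omega u^{p+l}$ then \emph{absorbs} all the $\int_\Omega u^{p+l}$ contributions created by the coupling, as soon as $\epsilon_1+\tilde\sigma$ is chosen small against $(p-1)\xi\gamma_0$.

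It remains to dispose of the attractive block $\int_\Omega u^{p+k}$, and the three hypotheses enter here. If $0<k<l$, Young's inequality gives $\int_\Omega u^{p+k}\le\epsilon_3\int_\Omega u^{p+l}+c$, so the surviving negative coefficient annihilates the entire $\int_\Omega u^{p+l}$ block; no smallness of $k$ or $l$ is needed, since repulsion dominates at the level of the productions. If instead $k,l\in(0,\frac2n)$, I do not order the two powers but estimate $\int_\Omega u^{p+k}$ directly by Gagliardo--Nirenberg (and likewise $\int_\Omega u^{p+l}$ should its coefficient fail to be negative): because $k,l<\frac2n$ the associated interpolation exponents stay below $1$, so Young's inequality turns each into $\epsilon\int_\Omega|\nabla u^{p/2}|^2+c$. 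Finally, if $k=l$ the two productions merge into a single coefficient $(p-1)(\chi\alpha-\xi\gamma_0)+\epsilon_1+\tilde\sigma$ in front of $\int_\Omega u^{p+l}$; when $\Theta_0=\chi\alpha-\xi\gamma_0<0$ I make this coefficient nonpositive and drop the term, while when $\Theta_0\ge0$ and $l<\frac2n$ I absorb $\int_\Omega u^{p+l}$ by the same Gagliardo--Nirenberg device. In all cases I fix $\epsilon_2$ and the remaining parameters so small that the coefficient of $\int_\Omega|\nabla u^{p/2}|^2$ stays negative and that of $\int_\Omega u^{p+l}$ is nonpositive, which is exactly \eqref{EstimFun}, and the comparison principle then delivers \eqref{Up}.
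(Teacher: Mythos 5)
Your proposal is correct and follows essentially the same route as the paper's proof: the same testing procedure with $u^{p-1}$ and substitution of the elliptic equations, the same Young--plus--Ehrling (Lemma \ref{EllipticEhrlingSystemLemma}) treatment of the coupling term $\int_\Omega u^p w$ with $\bar p = p/l$, the same Hölder/Gagliardo--Nirenberg dichotomy for $\left(\int_\Omega u^l\right)^{(p+l)/l}$ according to $l\le 1$ or $l>1$, and the same three-case absorption of $\int_\Omega u^{p+k}$ (Young against the repulsive block when $k<l$, Gagliardo--Nirenberg when $k,l<\frac{2}{n}$, sign of $\Theta_0$ when $k=l$) leading to \eqref{EstimFun} and the ODE comparison. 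No gaps to report.
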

\subsection*{Analysis of solutions to model \eqref{problemConFunzioniTempo}; proof of Theorem \ref{MainTheorem3}}
\begin{lemma}\label{Estim_general_For_u^pLemmaEllitticoTempo} 
Let $l>1$ and $k<l$. Then there exists $q>l$ such that whenever 
$p>\max\{\frac{q}{n}(qn-2),\frac{n}{2}\}$ we have that $u$ obeys the same relation in \eqref{Up}, possibly with a different constant $L$.

Moreover, the same conclusion holds true whenever either $l\in(0,1]$ and $k<l$, or $k=l$ and $\chi \alpha - \xi \gamma_0<0$, or $l=k\in\left(0,\frac{2}{n}\right)$ and 
$\chi \alpha - \xi \gamma_0\geq 0$.
\begin{proof}
By reasoning as in the proof of Lemma \ref{Estim_general_For_u^pLemmaEllittico}, we deduce the following estimate on $(0,\TM)$:
\begin{equation}\label{EstEst1}
\begin{split}
\frac{d}{dt} \int_\Omega u^p &= -p (p-1) \int_\Omega u^{p-2} |\nabla u|^2 
- (p-1) \chi \int_\Omega u^p \left(\frac{1}{|\Omega|}\int_\Omega f(u)-f(u)\right)
+ (p-1)\xi \int_\Omega u^p \left(\frac{1}{|\Omega|}\int_\Omega g(u)-g(u)\right)\\
&\leq -p(p-1) \int_\Omega u^{p-2} |\nabla u|^2 + (p-1) \chi \alpha \int_\Omega u^{p+k} 
+ \frac{(p-1)\xi}{|\Omega|}\int_\Omega g(u) \int_\Omega u^p
- (p-1)\xi \gamma_0 \int_\Omega u^{p+l}\\
&\leq -p(p-1) \int_\Omega u^{p-2} |\nabla u|^2 + (p-1) \chi \alpha \int_\Omega u^{p+k}
+ \const{am} \int_\Omega u^p + \const{an} \int_\Omega u^l \int_\Omega u^p
- (p-1)\xi \gamma_0 \int_\Omega u^{p+l}.
\end{split}  
\end{equation}
Since $k<l$, by exploiting Young's inequality we obtain for $\epsilon_5>0$ 
\begin{equation*}
(p-1) \chi \alpha \int_\Omega u^{p+k} \leq \epsilon_5 \int_\Omega u^{p+l} +\const{ap} 
\quad \textrm{on } (0,\TM).
\end{equation*}
If $0<l\leq1$, the product of the two integrals $\int_\Omega u^l\int_\Omega u^p$ in \eqref{EstEst1} can be treated by exploiting the mass conservation property \eqref{massConservation} and bound \eqref{GN3}, so to get for $\epsilon_6>0$ 
\[
\int_\Omega u^l \int_\Omega u^p \leq \const{aq} \int_\Omega u^p \leq \epsilon_6 
\int_\Omega \lvert \nabla u^\frac{p}{2}\rvert^2 + \const{ar} \quad \textrm{for all } t \in (0,\TM).
\]
On the other hand, if $l>1$ a twice application of the H\"{o}lder inequality gives 
\begin{equation*}  
\int_\Omega u^l \int_\Omega u^p \leq \const{as} \left[\left(\int_\Omega u^q \right)^{\frac{p}{q}+1}\right]^{\frac{l}{p+q}} \left(\int_\Omega u^{p+l}\right)^{\frac{p}{p+l}}
\quad \textrm{on } (0,\TM).
\end{equation*}
The restriction $q>l$ implies $\frac{l}{p+q}+\frac{p}{p+l}<1$, so that by invoking inequality \eqref{LemmaEsponenti} we can get for $\epsilon_7>0$
\begin{equation*}  
\int_\Omega u^l \int_\Omega u^p \leq \epsilon_7 \int_\Omega u^{p+l} 
+ \epsilon_7 \left(\int_\Omega u^q \right)^{\frac{p+q}{q}} + \const{at}
\quad \textrm{on } (0,\TM).
\end{equation*}
At this point, we can manipulate the term $\displaystyle \left(\int_\Omega u^q \right)^{\frac{p+q}{q}}$ as in \eqref{GN11}. Rephrasing estimate \eqref{EstEst1} in terms of what derived, the same motivations used in Lemma \ref{Estim_general_For_u^pLemmaEllittico} give the claim.
\end{proof}
\end{lemma}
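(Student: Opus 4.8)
The plan is to derive the absorptive inequality \eqref{EstimFun} for $y(t)=\int_\Omega u^p$, from which \eqref{Up} follows by the ODE comparison already recorded. First I would test the first equation of \eqref{problemConFunzioniTempo} against $u^{p-1}$ and integrate by parts: the diffusion yields the dissipation $-p(p-1)\int_\Omega u^{p-2}|\nabla u|^2=-\frac{4(p-1)}{p}\int_\Omega|\nabla u^{p/2}|^2$, while the two cross terms, after a second integration by parts, turn $\nabla v,\nabla w$ into $\Delta v,\Delta w$. The decisive structural feature of the nonlocal model is then exploited: the elliptic equations replace $\Delta v$ by $\frac{1}{|\Omega|}\int_\Omega f(u)-f(u)$ and $\Delta w$ by $\frac{1}{|\Omega|}\int_\Omega g(u)-g(u)$. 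The mean stemming from the attractant enters with a favourable sign and may be discarded, whereas the mean from the repellent survives as the genuinely nonlocal product $\frac{(p-1)\xi}{|\Omega|}\int_\Omega g(u)\int_\Omega u^p$. Using the growth bounds \eqref{f} together with $(1+u)^l\le 2^{l-1}(1+u^l)$ (via \eqref{InequalityA+BToPowerP}) produces a differential inequality of the shape
\[
\frac{d}{dt}\int_\Omega u^p\le -\frac{4(p-1)}{p}\int_\Omega|\nabla u^{p/2}|^2+c_1\int_\Omega u^{p+k}+c_2\int_\Omega u^p+c_3\int_\Omega u^l\int_\Omega u^p-(p-1)\xi\gamma_0\int_\Omega u^{p+l}.
\]

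Since $k<l$, the term $c_1\int_\Omega u^{p+k}$ is absorbed into $\int_\Omega u^{p+l}$ by Young's inequality, and the linear contribution $c_2\int_\Omega u^p$ is controlled through the Gagliardo--Nirenberg estimate \eqref{GN3}, at the cost of a small fraction of the dissipation. The whole difficulty therefore concentrates on the nonlocal product $\int_\Omega u^l\int_\Omega u^p$, for which, in contrast with the local setting, the single-integral Gagliardo--Nirenberg machinery used in Lemma \ref{Estim_general_For_u^pLemmaEllittico} does not apply directly.

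For $l\in(0,1]$ this product is harmless: mass conservation \eqref{massConservation} bounds $\int_\Omega u^l$ by a constant, and the remaining $c\int_\Omega u^p$ is again dispatched by \eqref{GN3}. The main obstacle is the regime $l>1$, and this is where I would introduce an auxiliary exponent $q>l$ and apply Hölder's inequality twice to bound $\int_\Omega u^l\int_\Omega u^p$ by $\big(\int_\Omega u^q\big)^{l/q}\big(\int_\Omega u^{p+l}\big)^{p/(p+l)}$. The choice $q>l$ is precisely what forces $\frac{l}{p+q}+\frac{p}{p+l}<1$, so that the Young-type inequality \eqref{LemmaEsponenti} of Lemma \ref{BoundsInequalityLemmaTecnicoYoung} rewrites this product as $\epsilon\int_\Omega u^{p+l}+\epsilon\big(\int_\Omega u^q\big)^{(p+q)/q}+c$. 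The surviving factor $\big(\int_\Omega u^q\big)^{(p+q)/q}$ is then handled exactly as in \eqref{GN11}: expressing it as a power of $\|u^{p/2}\|_{L^{2q/p}}$ and interpolating, the restriction $p>\frac{q}{n}(qn-2)$ guarantees that the resulting exponent on $\|\nabla u^{p/2}\|_{L^2}$ remains below one, so a final Young step absorbs it into the dissipation up to a constant. Choosing all small parameters so that the coefficient of $\int_\Omega u^{p+l}$ is made nonpositive against $(p-1)\xi\gamma_0$ and the coefficient of $\int_\Omega|\nabla u^{p/2}|^2$ stays strictly negative yields \eqref{EstimFun}. The remaining cases ($l\in(0,1]$ with $k<l$, and $k=l$ with either $\Theta_0<0$ or $\Theta_0\ge0$ and $l\in(0,\frac{2}{n})$) require no new idea: the product term is controlled as above and the competition between $\int_\Omega u^{p+k}$ and $\int_\Omega u^{p+l}$ is resolved exactly as in the proof of Lemma \ref{Estim_general_For_u^pLemmaEllittico}.
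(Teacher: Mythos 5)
Your proposal is correct and follows essentially the same route as the paper's proof: testing with $u^{p-1}$, discarding the favourable attractant mean while keeping the repellent's nonlocal product, treating that product via mass conservation for $l\in(0,1]$ and, for $l>1$, via a double H\"older estimate with an auxiliary exponent $q>l$ so that \eqref{LemmaEsponenti} applies, then absorbing $\left(\int_\Omega u^q\right)^{\frac{p+q}{q}}$ through the Gagliardo--Nirenberg step of \eqref{GN11} under $p>\frac{q}{n}(qn-2)$, and concluding as in Lemma \ref{Estim_general_For_u^pLemmaEllittico}. The only (harmless) difference is that you make explicit how the lower-order term $\int_\Omega u^p$ is absorbed via \eqref{GN3}, a point the paper leaves implicit.
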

\subsection*{Analysis of solutions to model \eqref{problem} with $\tau=1$; proof of Theorem \ref{Main1Theorem}}
\begin{lemma}\label{Estim_general_For_u^pLemmaParabolico1} 
For any $p>1$ and $r, s>2$, we have that $u$ is such that for $\epsilon_8, \epsilon_9>0$ 
\begin{equation}\label{ClaimParab}
\frac{d}{dt} \int_\Omega u^p \leq (\epsilon_8 + \epsilon_9 - p(p-1)) \int_\Omega u^{p-2} |\nabla u|^2 + \int_\Omega |\nabla v|^r + \int_\Omega |\nabla w|^s + c_{34} \int_\Omega u^{\frac{pr}{r-2}} + c_{36} \int_\Omega u^{\frac{ps}{s-2}}
\quad \textrm{for all } t \in (0,\TM).
\end{equation}
\begin{proof}
Testing procedures imply the following estimate
\begin{equation}\label{Parab1} 
\frac{d}{dt} \int_\Omega u^p = -p (p-1) \int_\Omega u^{p-2} |\nabla u|^2 
+ p(p-1) \chi \int_\Omega u^{p-1} \nabla u \cdot \nabla v 
- p(p-1)\xi \int_\Omega u^{p-1} \nabla u \cdot \nabla w  \quad \textrm{on } (0,\TM),
\end{equation}
whereas a twice application of Young's inequality infers for 
$\epsilon_8>0$ and for all $t \in (0,\TM)$
\begin{equation}\label{ParabYoung1}
p(p-1) \chi \int_\Omega u^{p-1} \nabla u \cdot \nabla v \leq 
\epsilon_8 \int_\Omega u^{p-2} |\nabla u|^2 + \const{az}\int_\Omega u^p |\nabla v|^2
\leq \epsilon_8 \int_\Omega u^{p-2} |\nabla u|^2 
+ \int_\Omega |\nabla v|^r + \const{ba} \int_\Omega u^{\frac{pr}{r-2}},
\end{equation} 
and for $\epsilon_9>0$ on $(0,\TM)$
\begin{equation}\label{ParabYoung2}
-p(p-1) \xi \int_\Omega u^{p-1} \nabla u \cdot \nabla w \leq 
\epsilon_9 \int_\Omega u^{p-2} |\nabla u|^2 + \const{bb} \int_\Omega u^p |\nabla w|^2
\leq \epsilon_9 \int_\Omega u^{p-2} |\nabla u|^2 
+ \int_\Omega |\nabla w|^s + \const{bc} \int_\Omega u^{\frac{ps}{s-2}}.
\end{equation}
The claim is deduced by simply plugging relations \eqref{ParabYoung1} and \eqref{ParabYoung2} into bound \eqref{Parab1}.
\end{proof}
\end{lemma}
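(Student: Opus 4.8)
The plan is to test the first equation of \eqref{problem} against $u^{p-1}$ and integrate over $\Omega$. Since $\int_\Omega u^{p-1}u_t=\frac1p\frac{d}{dt}\int_\Omega u^p$, I would integrate the three terms on the right by parts, using the homogeneous Neumann conditions $u_\nu=v_\nu=w_\nu=0$ to discard every boundary contribution, and after multiplying by $p$ arrive at the testing identity
\begin{equation*}
\frac{d}{dt}\int_\Omega u^p=-p(p-1)\int_\Omega u^{p-2}|\nabla u|^2+p(p-1)\chi\int_\Omega u^{p-1}\nabla u\cdot\nabla v-p(p-1)\xi\int_\Omega u^{p-1}\nabla u\cdot\nabla w
\end{equation*}
on $(0,\TM)$. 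The first integral is negative and supplies the dissipation to be tracked on the right-hand side of \eqref{ClaimParab}, so the whole matter reduces to estimating the two chemotactic integrals.

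Second, I would bound each cross term by two successive applications of Young's inequality. Writing $u^{p-1}\nabla u\cdot\nabla v=(u^{(p-2)/2}\nabla u)\cdot(u^{p/2}\nabla v)$, a first weighted Young inequality peels off a prescribed fraction $\epsilon_8\int_\Omega u^{p-2}|\nabla u|^2$ of the dissipation and leaves a term of the form $c\int_\Omega u^p|\nabla v|^2$ for some $c>0$; the repulsion term is treated identically, contributing $\epsilon_9\int_\Omega u^{p-2}|\nabla u|^2$ together with $c\int_\Omega u^p|\nabla w|^2$. This is the step in which the freedom in the constants $\epsilon_8,\epsilon_9>0$ is introduced.

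Third, a second Young inequality separates the products $u^p|\nabla v|^2$ and $u^p|\nabla w|^2$. I would pair the conjugate exponents $\tfrac r2$ and $\tfrac{r}{r-2}$ for the first (respectively $\tfrac s2$ and $\tfrac{s}{s-2}$ for the second); this is exactly the point where the hypothesis $r,s>2$ is indispensable, since only then is $\tfrac{r}{r-2}$ finite and the splitting admissible. Tuning the weight so that the gradient contribution appears with coefficient exactly $1$, I obtain
\begin{equation*}
c\int_\Omega u^p|\nabla v|^2\leq\int_\Omega|\nabla v|^r+c_{34}\int_\Omega u^{\frac{pr}{r-2}}
\end{equation*}
and the analogous bound in $w$, $s$ with exponent $\tfrac{ps}{s-2}$. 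Substituting these two estimates into the testing identity of the first step collects the dissipation into the single factor $(\epsilon_8+\epsilon_9-p(p-1))$ and reproduces precisely \eqref{ClaimParab}.

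Finally, a word on the main difficulty: the proof of this lemma is computationally routine and I do not expect a genuine analytic obstacle inside it, because the real work is deliberately deferred. The subtlety is structural rather than computational—the powers $\tfrac{pr}{r-2}$ and $\tfrac{ps}{s-2}$ surfacing from the second Young step, together with the gradient norms $\int_\Omega|\nabla v|^r$ and $\int_\Omega|\nabla w|^s$ appearing with unit coefficient, must later be absorbed through the Gagliardo--Nirenberg inequality and controlled by the $W^{1,r}$ and $W^{1,s}$ bounds of Lemma \ref{LocalV}. Thus the only care demanded here is to keep the exponents in a form compatible with those downstream estimates, which is exactly what the constraint $r,s>2$ and the coefficient-one normalization of the gradient terms guarantee.
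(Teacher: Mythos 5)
Your proposal is correct and follows essentially the same route as the paper: the testing identity obtained from multiplying the first equation by $u^{p-1}$ and integrating by parts, a first weighted Young inequality peeling off $\epsilon_8\int_\Omega u^{p-2}|\nabla u|^2$ (resp.\ $\epsilon_9\int_\Omega u^{p-2}|\nabla u|^2$) and leaving $c\int_\Omega u^p|\nabla v|^2$ (resp.\ $c\int_\Omega u^p|\nabla w|^2$), and a second Young inequality with conjugate exponents $\frac{r}{2},\frac{r}{r-2}$ (resp.\ $\frac{s}{2},\frac{s}{s-2}$) producing the terms $\int_\Omega|\nabla v|^r+c\int_\Omega u^{\frac{pr}{r-2}}$ (resp.\ $\int_\Omega|\nabla w|^s+c\int_\Omega u^{\frac{ps}{s-2}}$). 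Your closing remarks on why $r,s>2$ is needed and how the exponents are consumed downstream via Lemma \ref{LocalV} are accurate but not part of the paper's (equally brief) argument.
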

\begin{lemma}\label{Estim_general_For_u^pLemmaParabolico2} 
Let either $l,k \in \left(0,\frac{1}{n}\right]=I$, or $l,k \in \left(\frac{1}{n},\frac{1}{n}+\frac{2}{n^2+4}\right)=J$, or $l\in I$ and $k\in J$ (or viceversa). Then for some $p>\frac{n}{2}$ and some $L_1>0$, we have that $u$ complies with
\begin{equation*}\label{Up1}
\int_\Omega u^p \leq L_1 \quad \textrm{for all}\quad t \in (0,T_{max}).
\end{equation*}
\begin{proof}
Let us set $p_0=\frac{n}{2}$, $\rho_0=\frac{2}{n}$ and $\phi(p,\rho,\sigma):=\frac{p\sigma}{\sigma-2}-(p+\rho)$, which is defined for all $p>1$, $\rho>0$ and 
$\sigma>2$. 
Since $\phi(p_0, \rho_0, \sigma)=\frac{n^2-2 \sigma+4}{n (\sigma-2)}<0$ for all $\sigma>\frac{n^2+4}{2}$, by continuity there are
$p>p_0$ and $\rho<\rho_0$ such that $\phi(p,\rho,\sigma)<0$, i.e.
\begin{equation}\label{SigmaRho}
\frac{p\sigma}{\sigma-2} < p+\rho.
\end{equation} 
Being our scope controlling the terms $\int_\Omega |\nabla v|^r$, 
$\int_\Omega u^{\frac{pr}{r-2}}$ and  $\int_\Omega |\nabla w|^s$, 
$\int_\Omega u^{\frac{ps}{s-2}}$ appearing in \eqref{ClaimParab}, let us take into consideration the regularity of $v$ and $w$, and in particular the functional spaces where $\nabla v$ and $\nabla w$ belong, and let us study three different cases. 
\begin{itemize}
\item [$\bullet$] Case $k,l \in I$. From Lemma \ref{LocalV}, it is known that $\nabla v \in L^{\infty}((0,\TM);L^r(\Omega))$ and $\nabla w \in  L^{\infty}((0,\TM);L^s(\Omega))$ for all $r,s >1$, so that we may pick $r=s=\sigma>\frac{n^2+4}{2}>2$, and by applying \eqref{SigmaRho} we have for all $t \in (0,\TM)$
\begin{equation}\label{ParabYoung3}
p(p-1) \chi \int_\Omega u^{p-1} \nabla u \cdot \nabla v \leq \epsilon_8 \int_\Omega u^{p-2} |\nabla u|^2 +\int_\Omega |\nabla v|^r + \const{ba} \int_\Omega u^{\frac{pr}{r-2}}
\leq \epsilon_8 \int_\Omega u^{p-2} |\nabla u|^2 + \int_\Omega u^{p+\rho}+\const{be}, 
\end{equation} 
and on $(0,\TM)$
\begin{equation}\label{ParabYoung4}
-p(p-1) \xi \int_\Omega u^{p-1} \nabla u \cdot \nabla w 
\leq \epsilon_9 \int_\Omega u^{p-2} |\nabla u|^2 
+ \int_\Omega |\nabla w|^s + \const{bc} \int_\Omega u^{\frac{ps}{s-2}}
\leq \epsilon_9 \int_\Omega u^{p-2} |\nabla u|^2 + \int_\Omega u^{p+\rho} + \const{bg}.
\end{equation}
\item [$\bullet$] Case $k \in I$ and $l\in J$ (or, viceversa, $k\in J$ and $l \in I$).
We discuss the first situation, for which accordingly to Lemma \ref{LocalV}, we have $\nabla v \in L^{\infty}((0,\TM);L^r(\Omega))$, for all $r\geq 1$ and $\nabla w \in  L^{\infty}((0,\TM);L^s(\Omega))$ for all $1\leq s< \frac{n}{nl-1}$. (The second case is totally analogous.) Clearly estimate \eqref{ParabYoung3} still holds, while for the validity of bound \eqref{ParabYoung4} we have to check that 
$\frac{n}{nl-1} > \frac{n^2+4}{2}$ for all $l\in J$. Since $\Lambda(l):=\frac{n}{nl-1}$ is decreasing in $J$ and $\Lambda\left(\frac{1}{n}+\frac{2}{n^2+4}\right)=\frac{n^2+4}{2}$, we conclude. 
 \item [$\bullet$] Case $l, k\in J$.
By following for $k$ the same arguments used previously for $l$, we can ensure  that \eqref{ParabYoung3} and \eqref{ParabYoung4} are satisfied.
\end{itemize}
As a consequence of this, by putting bounds \eqref{ParabYoung3} and \eqref{ParabYoung4} into 
estimate \eqref{Parab1} and by manipulating the term $\displaystyle \int_\Omega u^{p+\rho}$, with $\rho<\frac{2}{n}$ as the integral in \eqref{GN2}, we are in the position to deduce the claim.
\end{proof}
\end{lemma}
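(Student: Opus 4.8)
The plan is to take the differential inequality \eqref{ClaimParab} produced by Lemma~\ref{Estim_general_For_u^pLemmaParabolico1} as the starting point and convert it into an absorptive inequality of the type \eqref{EstimFun}, by choosing the free exponents $p,r,s$ together with an auxiliary parameter $\rho$ in a coordinated way. The organising device is the auxiliary quantity $\phi(p,\rho,\sigma):=\frac{p\sigma}{\sigma-2}-(p+\rho)$, defined for $p>1$, $\rho>0$, $\sigma>2$. Evaluating it at the borderline values $p_0=\frac n2$, $\rho_0=\frac2n$ one finds $\phi(p_0,\rho_0,\sigma)=\frac{n^2+4-2\sigma}{n(\sigma-2)}$, which is negative exactly when $\sigma>\frac{n^2+4}{2}$. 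Hence by continuity I can fix a single exponent $p>\frac n2$, together with $\rho<\frac2n$ and some $\sigma>\frac{n^2+4}{2}$, for which $\frac{p\sigma}{\sigma-2}<p+\rho$. This $p$ will be the one appearing in the asserted bound, and the strict inequality $\rho<\frac2n$ is precisely what renders the later Gagliardo--Nirenberg step subcritical.

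With such a $\sigma$ in hand, I would next set $r=s=\sigma$ in \eqref{ClaimParab} and bound its four nontrivial terms. The two mass-type integrals $\int_\Omega u^{\frac{pr}{r-2}}$ and $\int_\Omega u^{\frac{ps}{s-2}}$ are controlled by $\int_\Omega u^{p+\rho}$ up to an additive constant, since $\frac{p\sigma}{\sigma-2}<p+\rho$ and, on the bounded domain with mass conservation \eqref{massConservation} in force, a lower power is dominated by a higher one modulo a constant. The two gradient integrals $\int_\Omega|\nabla v|^r$ and $\int_\Omega|\nabla w|^s$ must instead be shown to be finite through Lemma~\ref{LocalV}, and this is where the three cases diverge. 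When $k,l\in I$ both gradients lie in every Lebesgue space, so the choice $r=s=\sigma$ is admissible with no further check. When $l\in J$, however, $\nabla w$ belongs only to $L^s$ for $s<\frac{n}{nl-1}$, so I must verify $\frac{n}{nl-1}>\frac{n^2+4}{2}$ for every $l\in J$; this is the delicate threshold check, and it holds because $\Lambda(l):=\frac{n}{nl-1}$ is decreasing on $J$ with $\Lambda\!\left(\frac1n+\frac{2}{n^2+4}\right)=\frac{n^2+4}{2}$. The symmetric statement disposes of $k\in J$, and the mixed and purely-$J$ cases follow by applying this observation to the relevant variable or to both.

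After these reductions I would be left with an inequality of the shape
\begin{equation*}
\frac{d}{dt}\int_\Omega u^p \le \bigl(\epsilon_8+\epsilon_9-p(p-1)\bigr)\int_\Omega u^{p-2}|\nabla u|^2 + C_1\int_\Omega u^{p+\rho}+C_2,
\end{equation*}
and the concluding step is to absorb $\int_\Omega u^{p+\rho}$ into the dissipation exactly as $\int_\Omega u^{p+k}$ was treated in \eqref{GN2}: because $\rho<\frac2n$, the Gagliardo--Nirenberg inequality combined with \eqref{massConservation} yields $\int_\Omega u^{p+\rho}\le\epsilon\int_\Omega|\nabla u^{p/2}|^2+C$ for arbitrarily small $\epsilon$. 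Rewriting $\int_\Omega u^{p-2}|\nabla u|^2=\frac{4}{p^2}\int_\Omega|\nabla u^{p/2}|^2$ and choosing $\epsilon_8,\epsilon_9,\epsilon$ small, the gradient contributions collapse to a negative multiple of $\int_\Omega|\nabla u^{p/2}|^2$, which is an inequality of type \eqref{EstimFun}; the Gagliardo--Nirenberg-based ODE comparison argument built around \eqref{GN3} then furnishes the uniform bound $\int_\Omega u^p\le L_1$. I expect the genuine obstacle to be the first step together with the threshold verification: one must secure a \emph{single} $p>\frac n2$ compatible with both the subcriticality $\rho<\frac2n$ and the lower bound $\sigma>\frac{n^2+4}{2}$ forced by the gradient regularity of $v$ and $w$, the interval $J$ being calibrated precisely so that $\Lambda(l)$ stays above that threshold. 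Every remaining manipulation is of the testing-and-absorption type already carried out in the elliptic lemmas.
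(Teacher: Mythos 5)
Your proposal is correct and follows essentially the same route as the paper: the same auxiliary function $\phi(p,\rho,\sigma)$ evaluated at $p_0=\frac{n}{2}$, $\rho_0=\frac{2}{n}$ with the continuity argument to fix $p>\frac{n}{2}$, $\rho<\frac{2}{n}$, $\sigma>\frac{n^2+4}{2}$ satisfying \eqref{SigmaRho}; the same three-case analysis via Lemma \ref{LocalV}, including the identical threshold verification that $\Lambda(l)=\frac{n}{nl-1}$ is decreasing on $J$ with $\Lambda\left(\frac{1}{n}+\frac{2}{n^2+4}\right)=\frac{n^2+4}{2}$; and the same conclusion by absorbing $\int_\Omega u^{p+\rho}$ through the Gagliardo--Nirenberg step of \eqref{GN2} and invoking the ODE comparison scheme built around \eqref{EstimFun} and \eqref{GN3}.
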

\section{Numerical simulations}\label{SectionSimulations}
\begin{figure}[htbp]
\centering
\subfigure[Evolution in time of $\max u$.]{
    \includegraphics[width=8.7cm]{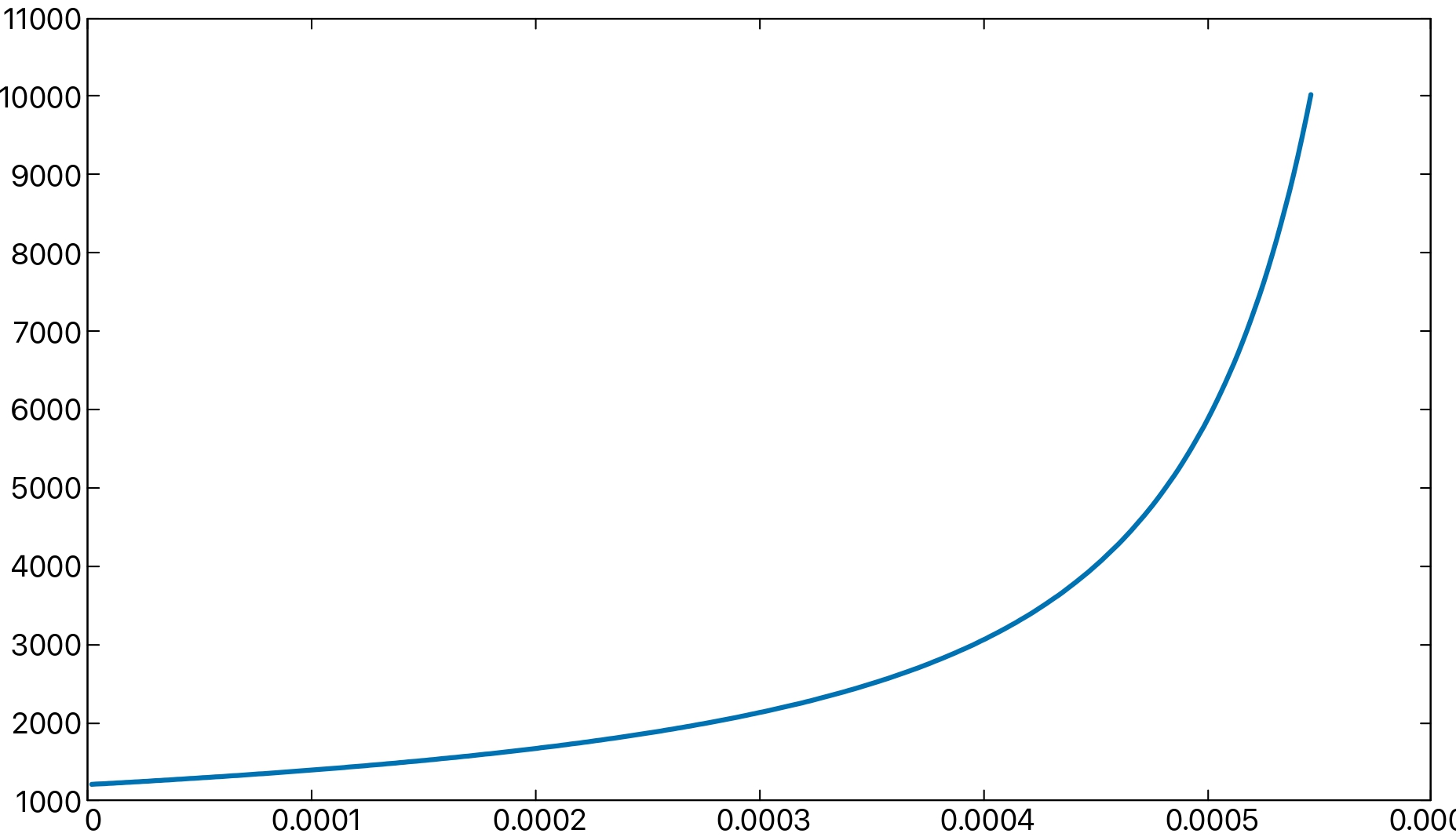}
    }
\hspace{0.2cm}
\subfigure[Evolution in time of $\max u$.]{
    \includegraphics[width=8.7cm]{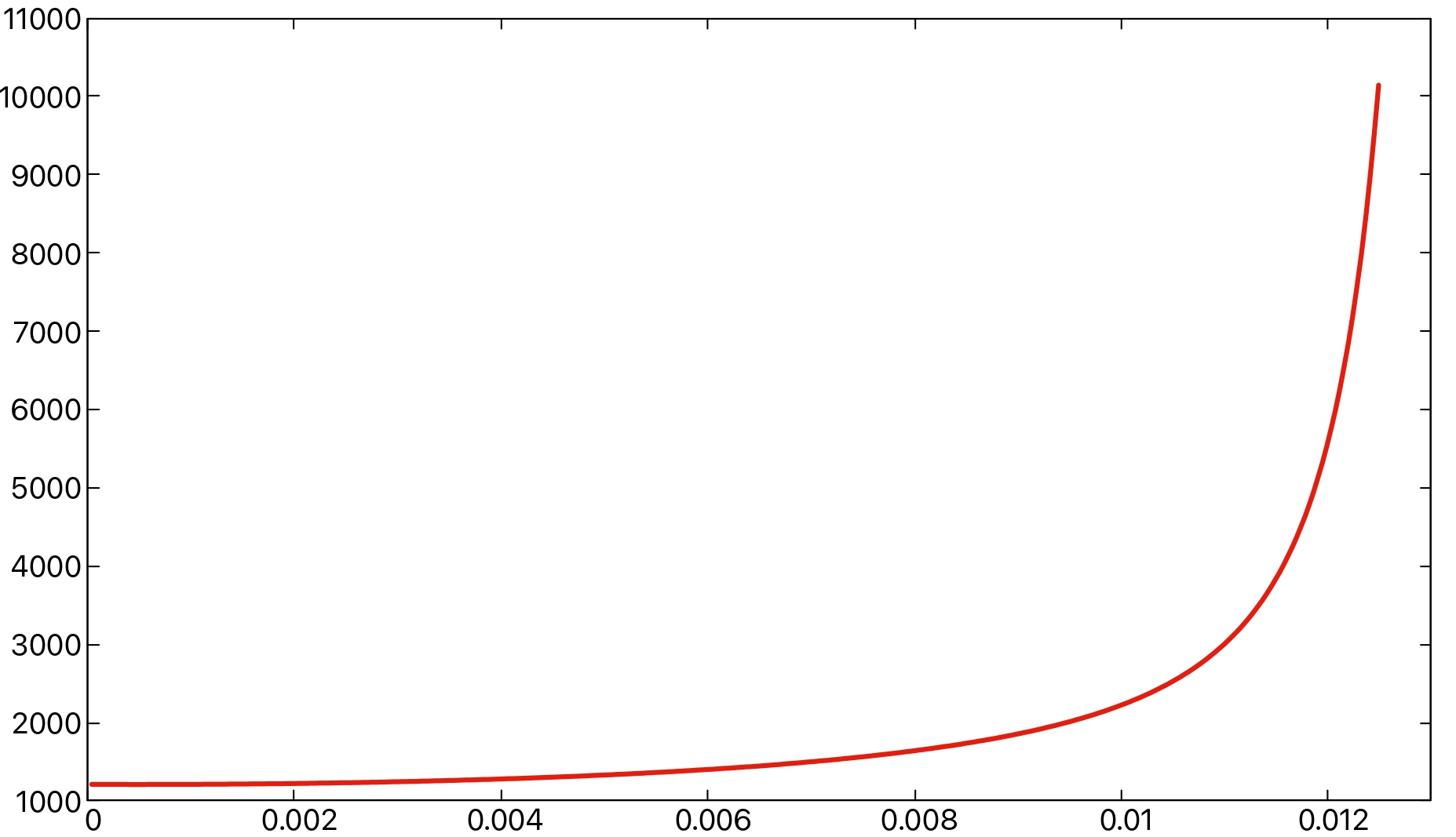}
}
\\
\subfigure[Comparison of $\max u$.]{
    \includegraphics[width=8.7cm]{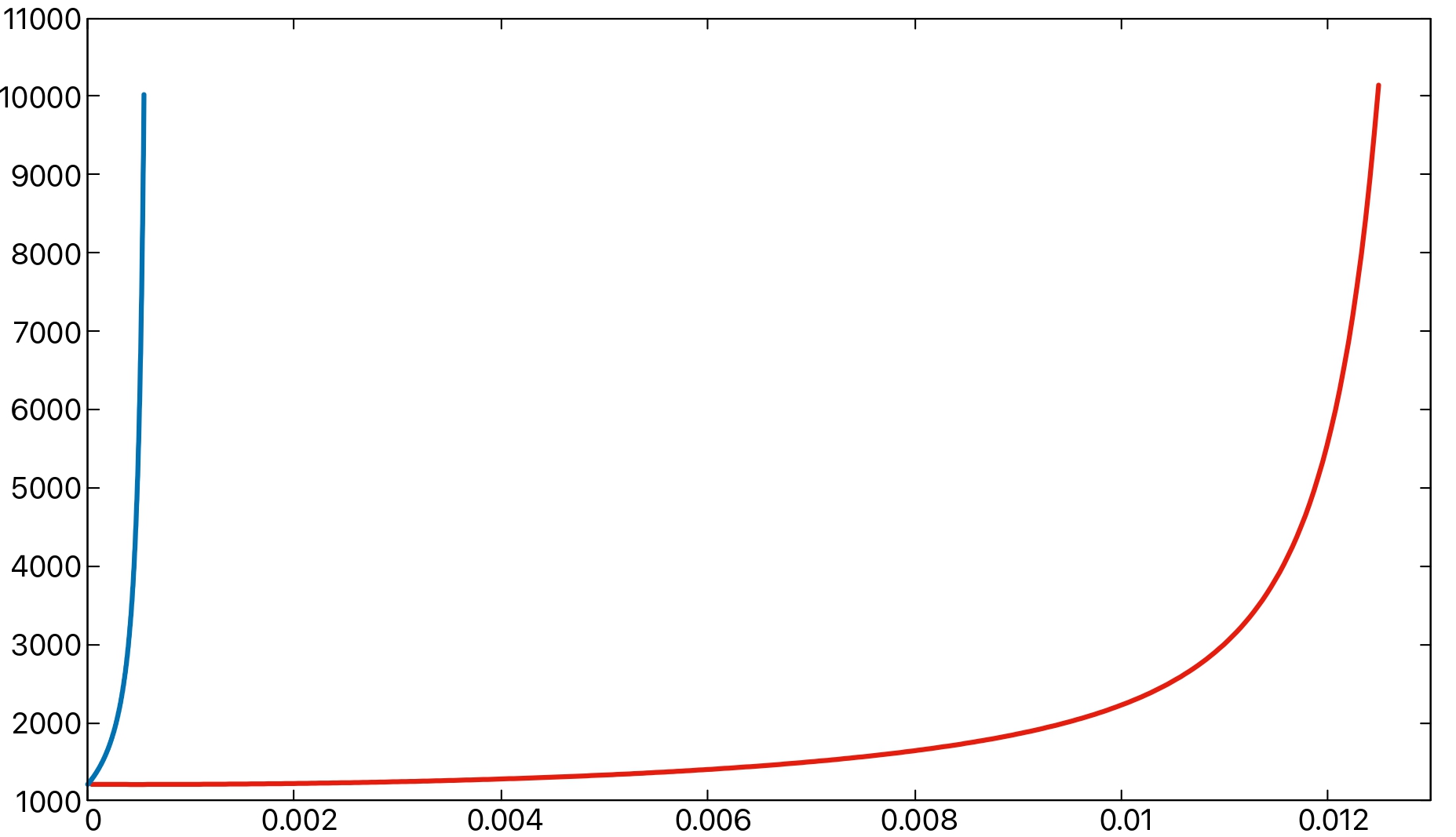}\label{SubFigCompar}}
 \hspace{0.2cm}
\subfigure[Representation of $\delta$-formations.]{
    \includegraphics[width=8.7cm]{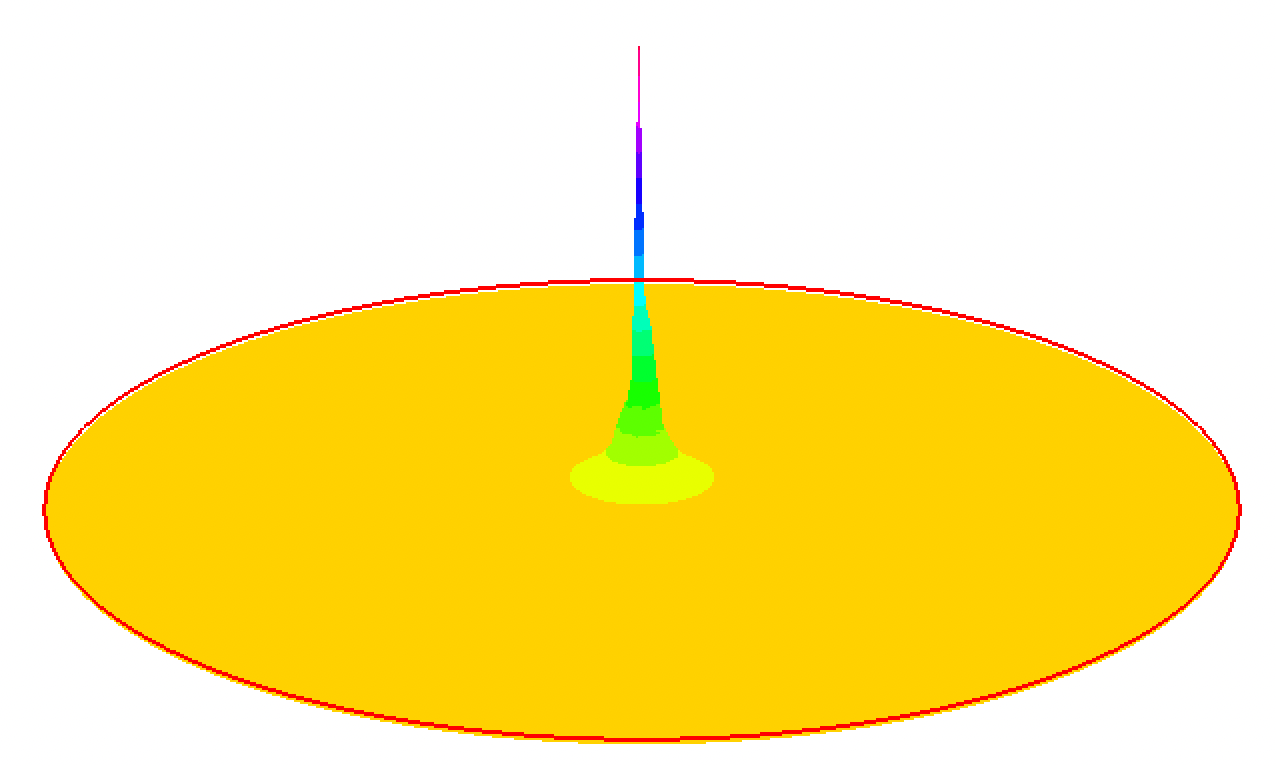}
}
\caption{{\bf{Blow-up case.}} Simulations of model \eqref{problem} for $\tau=0$ (blue color) and $\tau=1$ (red color). Evolution of the cells' density: $\TM\approx 0.00054$ (top left),  $\TM\approx 0.0012$ (top right), comparison of behaviors (bottom left), qualitative representation of finite time blow-up at a spatial point (bottom right). The quantitative values are indicated on the axes.}\label{fig:Blow-UPbothaCases} 
\end{figure}
\begin{figure}[htbp]
\centering
\subfigure[Evolution in time of $\max u$, $\max v$ and $\max w$.]{
    \includegraphics[width=8.7cm]{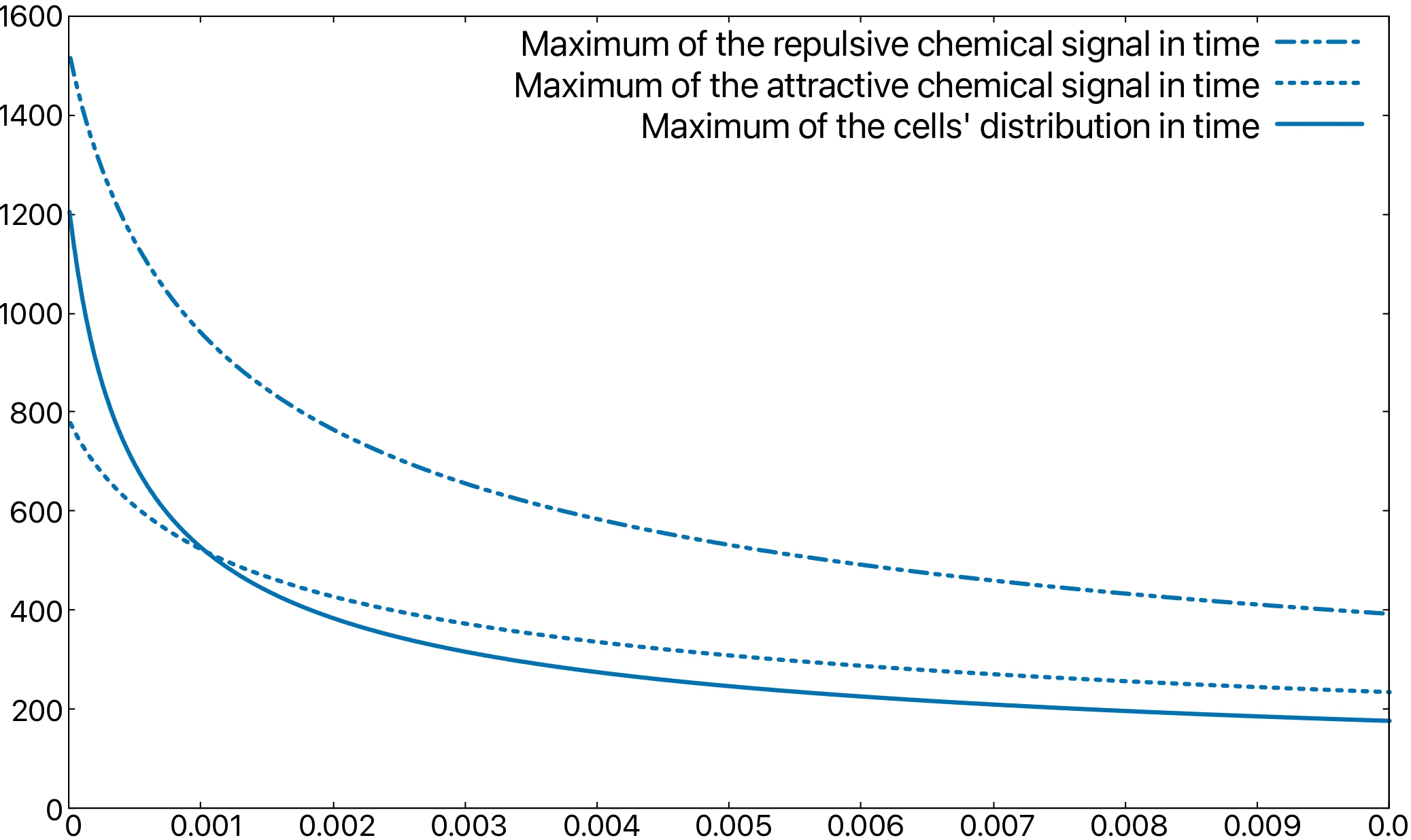}
    }
\hspace{0.2cm}
\subfigure[Evolution in time of $\max u$, $\max v$ and $\max w$.]{
    \includegraphics[width=8.7cm]{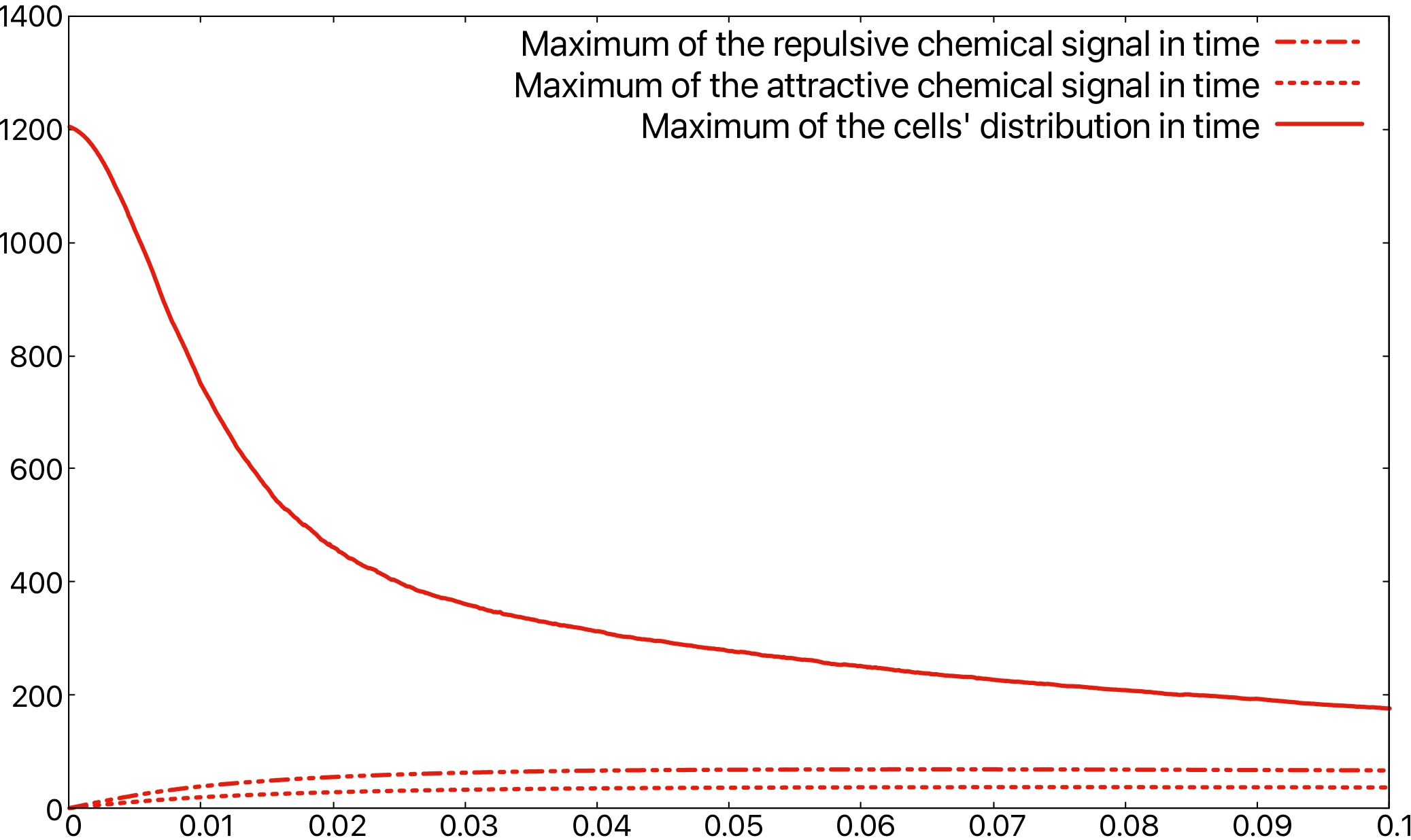}
}
\caption{{\bf{Global boundedness case.}} Simulations of model \eqref{problem} for $\tau=0$ (blue color) and $\tau=1$ (red color). Evolution of the cells' density and the chemical signals. Whilst for $\tau=1$, we have that $\max v_0$ and $\max w_0$ are given, for $\tau=0$ they are computed accordingly to what said in Algorithm \ref{AlgorithNumericoEllitico}. The quantitative values are indicated on the axes.}\label{fig:BoundednessbothaCases}
\end{figure}
Numerical simulations of taxis-driven models are by no means an easy goal and addressing technical and computational details connected with this analysis is a very challenging issue for the researchers in this field. This is essentially due to the fact that solutions to such
systems may exhibit interesting mathematical properties (as for instance blow-up) which cannot be easily reproduced in a discrete framework; indeed, spurious oscillations may appear during the simulations and this might lead to inconsistent results.

In this section we numerically test the presented theoretical results by simulating
system \eqref{problem} in two dimensions, for both the elliptic and parabolic version. The simulations are used to detect specific data and values of the parameters  which delineate regions where different behaviors of solutions may manifest. We are, in particular, interested in two specific scenarios: solutions are bounded and their asymptotic behavior converges to a constant stationary solution, or they suffer from chemotactic blow-up in finite time.  

To this aim we introduce a Finite Element Method (FEM) space
discretization, defined on a triangular mesh of the domain $\Omega$, with first order piecewise polynomials. On the other hand, we consider an implicit Euler scheme with time discretization $\Delta t$, and the algorithm at each time step consists of two operations: First we compute $v$ and $w$ as solutions of the (uncoupled) nonlinear equations in \eqref{problem} modeling the productions of these signals, being $u$ explicitly given from the previous time step. Successively, the values of $u$ at any node of the mesh are updated by solving the chemotaxis equation, exactly using the gained quantities of the repulsive and attractive substances at those nodes. Explicitly we have this 
\begin{algorithm}\label{AlgorithNumericoEllitico}
Let $j$ indicate the time step. For $j=0,1,\ldots$, we distinguish the cases $\tau=0$ and $\tau=1$:
\begin{itemize}
\item [$\triangleright$] $\tau=0$. Given $u_j$, we compute $v_{j}$ and $w_{j}$. Then we obtain $u_{j+1}$, by using $(u_j,v_{j},w_{j})$. 
\item [$\triangleright$] $\tau=1$. Given $(u_j,v_j,w_j)$, we compute $v_{j+1}$ and $w_{j+1}$. Then we obtain $u_{j+1}$, by using $(u_j,v_{j+1},w_{j+1})$. 
\end{itemize}
As to the blow-up scenario, we fix this threshold criterion: 
\begin{itemize}
\item [$\triangleright$] If for some $j$ it holds that $\max u_j\geq  10^4$, then $T_{max}\approx j \Delta t.$ 
\end{itemize}
\end{algorithm}
The computations are developed  by means of the the open source library \textsf{FreeFem++} (see \cite{HechFreeFem}). 
\subsection{Comparing blow-up and boundedness scenarios for the cases $\tau=0$ and $\tau=1$}
We consider a mesh of the unit circle $\Omega=\{(x_1,x_2)\in \R^2 \mid x_1^2+x_2^2<81 \}$ composed by 43148 triangular elements, and a time discretization $\Delta t = 10^{-5}$. We take $\chi=\xi=1$, $f(u)=\alpha u^k$, $\gamma_0=\gamma_1$ and, thence, $g(u)=\gamma_0 (1+u)^l$, $\Theta_0=\chi \alpha -\xi \gamma_0=0$ and the Gaussian bell-shaped initial values
 \begin{equation*}
 \begin{cases}
 u_0(x_1,x_2)=15 e^{-(x_1^2+x_2^2)}(81-(x_1^2+x_2^2)) & \textrm{ if } \tau=0,\\
 u_0(x_1,x_2)=15 e^{-(x_1^2+x_2^2)}(81-(x_1^2+x_2^2)),  v_0(x_1,x_2)= w_0(x_1,x_2)=  e^{-(x_1^2+x_2^2)} & \textrm{ if } \tau=1.
 \end{cases}
 \end{equation*}
Starting by highly spiky initial cells’ density (whose value is $1215$), our objective is finding production rates for the chemorepellent and chemoattractant so that either the value of such a spike becomes higher and the appearance of $\delta$-formations is observed, or decreases and stabilizes. As to the first goal (for which we adopted an adaptive mesh refinement), we choose $k,l$ outside the boundedness ranges in Theorems \ref{MainTheorem} and \ref{Main1Theorem}, and precisely $k=1.1$ and $l=0.9$. For both models ($\tau=0$ and $\tau=1$) it is seen (Figure \ref{fig:Blow-UPbothaCases}) that the cell distribution uncontrollably increases, up to an explosion at finite time. Nevertheless, a significant difference between the corresponding values of the blow-up time is seen, and this difference can be well appreciated in Subfigure \ref{SubFigCompar}.

Oppositely, without changing $u_0$, we can prevent the previous singularity by suitably changing the values of $k$ and $l$. Specifically, for $k=1.1$ and $l=1.2$ the model does not present any gathering effect and the cells and chemical densities remain bounded in time. We point out that the result obtained with these values of $k$ and $l$ is consistent with assumption (\ref{itemkminltheoremelliptic}) in Theorem \ref{MainTheorem}, but it cannot be discussed in the frame of Theorem \ref{Main1Theorem}. Finally, Figure \ref{fig:BoundednessbothaCases} quantifies the maximum values of $u, v$ and $w$; in particular, it is possible to observe how the trend of the curves representing $\max u$, $\max v$ and $\max w$ approach the plateau and they become constant throughout the time. Actually (data not shown) they eventually achieve the constant equilibrium $(u,v,w)=\left(\frac{m}{|\Omega|},\frac{\alpha}{\beta}\left(\frac{m}{|\Omega|}\right)^k,\frac{\gamma}{\delta}\left(1+\frac{m}{|\Omega|}\right)^l\right)$, which manifestly solves the stationary problem 
\begin{equation*}
\begin{cases}
0= \Delta u - \chi \nabla \cdot (u \nabla v)+\xi \nabla \cdot (u \nabla w)=\Delta v-\beta v +\alpha u^k=\Delta w - \delta w + \gamma_0 (1+u)^l  & \text{ in } \Omega \times (0,\infty),\\
u_{\nu}=v_{\nu}=w_{\nu}=0 & \text{ on } \partial \Omega \times (0,\infty).\\
\end{cases}
\end{equation*}
\subsection{Analyzing the influence of the parameters $k$ and $l$ on $\TM$}
With $\Omega$, $\Delta t$, $f(u)$, $g(u)$ and $\Theta_0$ introduced above, in Table \ref{TableInfluencekAndL} we schematize heuristic analyses of some simulations by  particularly indicating the value of the blow-up time for different initial configurations and some parameter sweep for $k,l,\Theta_0$. The first row of the table was inserted to show the consistency of the numerical simulations with the theoretical conclusion of Theorem \ref{MainTheorem}, when item (\ref{itemkUGUALEltheoremelliptic}) is considered. The other situations are of natural interpretation. The second and third rows give hints about the increasing of the blow-up time when $k$ decreases; in particular, for $k>l$, but $k<1\left(=\frac{2}{2}\right)$, gathering effects are appreciable. (Recall that for model \eqref{problemConFunzioniTempo}, blow-up in planar domain is ensured for $k>l$ and $k>1$.) If, indeed, we observe the part of the table dealing with the parabolic case, from the one hand boundedness is achieved for $k=l=0.5$, accordingly to the assumption (\ref{Item1-Parabol}) of Theorem \ref{Main1Theorem}, but from the other hand the same might occur for higher values.   
A further conclusion can be extrapolated by the fourth and sixth rows: indeed, one can observe that whenever the initial distribution of the chemoattractant $v_0$ presents a higher maximum, the related blow-up time decreases. 
\begin{table}
\begin{center}
\begin{tabular}{c|c|c|c|c|c|c||c}

                          & $u_0$                          & $v_0$             & $w_0$            & $k$         & $l$         & $\Theta_0$      & $\TM$    \\ \hline \hline
\multirow{3}{*}{$\tau=0$} & $15e^{-(x_1^2+x_2^2)}(81-x_1^2-x_2^2)$ &                   &                  & $0.5$ & $0.5$ & $0.36$  & $+\infty$       \\ \cline{2-8} 
                          & $15e^{-(x_1^2+x_2^2)}(81-x_1^2-x_2^2)$ &                   &                  & $1.2$ & $1$   & $-0.2$  & $0.00035$ \\ \cline{2-8} 
                          & $15e^{-(x_1^2+x_2^2)}(81-x_1^2-x_2^2)$ &                   &                  & $0.8$ & $0.6$ & $-0.64$  & $0.0188$       \\ \hline 
\multirow{4}{*}{$\tau=1$} & $15e^{-(x_1^2+x_2^2)}(81-x_1^2-x_2^2)$ & $e^{-(x_1^2+x_2^2)}$  & $e^{-(x_1^2+x_2^2)}$ & $1$   & $0.8$ & $-0.2$  & $0.025$  \\ \cline{2-8} 
                          & $15e^{-(x_1^2+x_2^2)}(81-x_1^2-x_2^2)$ & $e^{-(x_1^2+x_2^2)}$ & $e^{-(x_1^2+x_2^2)}$ & $0.5$   & $0.5$ & $0.52$  & $+\infty$   \\ \cline{2-8} 
                          & $15e^{-(x_1^2+x_2^2)}(81-x_1^2-x_2^2)$ & $5e^{-(x_1^2+x_2^2)}$ & $e^{-(x_1^2+x_2^2)}$ & $1$   & $0.8$ & $-0.2$  & $0.022$   \\
                           \cline{2-8} 
                          & $15e^{-(x_1^2+x_2^2)}(81-x_1^2-x_2^2)$ & $5e^{-(x_1^2+x_2^2)}$ & $e^{-(x^2+x_2^2)}$ & $0.8$ & $0.8$ & $-0.58$ & $+\infty$       \\ \hline
\end{tabular}
\end{center}
\caption{Approximated values of $\TM$ in terms of the sizes of $k,l, \Theta_0$ and the initial data.  With $\TM=+\infty$ we mean that the solution is globally bounded.}\label{TableInfluencekAndL}
\end{table}

\subsubsection*{Acknowledgments}
SF and GV are members of the Gruppo Nazionale per l'Analisi Matematica, la Probabilit\`a e le loro Applicazioni (GNAMPA) of the Istituto Na\-zio\-na\-le di Alta Matematica (INdAM). The authors are partially supported by the research projects \textit{Evolutive and stationary Partial Differential Equations with a focus on biomathematics}, funded by Fondazione di Sardegna (2019), and GV also by MIUR (Italian Ministry of Education, University and Research) Prin 2017 \textit{Nonlinear Differential Problems via Variational, Topological and Set-valued Methods} (Grant Number: 2017AYM8XW). 


\begin{thebibliography}{10}

\bibitem{BrezisBook}
H.~Brezis.
\newblock {\em Functional Analysis, {S}obolev Spaces and Partial Differential
  Equations}.
\newblock Springer-Verlag, New York, 2011.

\bibitem{ChiyoMarrasTanakaYokota2021}
Y.~Chiyo, M.~Marras, Y.~Tanaka, and T.~Yokota.
\newblock Blow-up phenomena in a parabolic-elliptic-elliptic
  attraction-repulsion chemotaxis system with superlinear logistic degradation.
\newblock {\em Nonlinear Anal.}, 212:Paper No. 112550, 14, 2021.

\bibitem{frassuviglialoro1}
S.~Frassu and G.~Viglialoro.
\newblock Boundedness for a fully parabolic {K}eller--{S}egel model with
  sublinear segregation and superlinear aggregation.
\newblock {\em Acta Appl. Math.}, 171:Paper No. 19, 20, 2021.

\bibitem{GuoJiangZhengAttr-Rep}
Q.~Guo, Z.~Jiang, and S.~Zheng.
\newblock Critical mass for an attraction-repulsion chemotaxis system.
\newblock {\em Appl. Anal.}, 97(13):2349--2354, 2018.

\bibitem{HechFreeFem}
F.~Hecht.
\newblock New development in \textsf{FreeFem++}.
\newblock {\em J. Numer. Math.}, 20(3-4):251--265, 2012.

\bibitem{HerreroVelazquez}
M.~A. Herrero and J.~J.~L. Vel\'{a}zquez.
\newblock A blow-up mechanism for a chemotaxis model.
\newblock {\em Ann. Scuola Norm. Sup. Pisa Cl. Sci. (4)}, 24(4):633--683, 1997.

\bibitem{LiangEtAlAtt-RepNonLinProdLogist-2020}
L.~Hong, M.~Tian, and S.~Zheng.
\newblock An attraction-repulsion chemotaxis system with nonlinear productions.
\newblock {\em J. Math. Anal. Appl.}, 484(1):123703, 8, 2020.

\bibitem{HorstWink}
D.~Horstmann and M.~Winkler.
\newblock Boundedness vs. blow-up in a chemotaxis system.
\newblock {\em J. Differerential Equations}, 215(1):52--107, 2005.

\bibitem{JaLu}
W.~J{\"a}ger and S.~Luckhaus.
\newblock On explosions of solutions to a system of partial differential
  equations modelling chemotaxis.
\newblock {\em Trans. Amer. Math. Soc.}, 329(2):819--824, 1992.

\bibitem{Jameson_2014Inequality}
G.~J.~O. Jameson.
\newblock {Some inequalities for $(a+b)^p$ and $(a+b)^p+(a-b)^p$}.
\newblock {\em Math. Gaz.}, 98(541):96--103, 2014.

\bibitem{K-S-1970}
E.~F. Keller and L.~A. Segel.
\newblock Initiation of slime mold aggregation viewed as an instability.
\newblock {\em J. Theoret. Biol.}, 26(3):399--415, 1970.

\bibitem{Keller-1971-MC}
E.~F. Keller and L.~A. Segel.
\newblock {Model for chemotaxis.}
\newblock {\em J. Theoret. Biol.}, 30(2):225--234, 1971.

\bibitem{Keller-1971-TBC}
E.~F. Keller and L.~A. Segel.
\newblock {Traveling bands of chemotactic bacteria: A theoretical analysis}.
\newblock {\em J. Theoret. Biol.}, 30(2):235, 1971.

\bibitem{LSUBookInequality}
O.~A. Lady\v{z}enskaja, V.~A. Solonnikov, and N.~N. Ural'ceva.
\newblock {Linear and Quasi-Linear Equations of Parabolic Type}.
\newblock In {\em {Translations of Mathematical Monographs}}, volume~23.
  American Mathematical Society, 1988.

\bibitem{LI-LiAttrRepuls}
Y.~Li and Y.~Li.
\newblock Blow-up of nonradial solutions to attraction-repulsion chemotaxis
  system in two dimensions.
\newblock {\em Nonlinear Anal. Real World Appl.}, 30:170--183, 2016.

\bibitem{LiuTaoFullyParNonlinearProd}
D.-m. Liu and Y.-s. Tao.
\newblock Boundedness in a chemotaxis system with nonlinear signal production.
\newblock {\em Appl. Math. J. Chinese Univ. Ser. B}, 31(4):379--388, 2016.

\bibitem{LiuLi-2021NLARWA}
M.~Liu and Y.~Li.
\newblock Finite-time blowup in attraction-repulsion systems with nonlinear
  signal production.
\newblock {\em Nonlinear Anal. Real World Appl.}, 61:Paper No. 103305, 21,
  2021.

\bibitem{Luca2003Alzheimer}
M.~Luca, A.~Chavez-Ross, L.~Edelstein-Keshet, and A.~Mogilner.
\newblock {Chemotactic signaling, microglia, and Alzheimer's disease senile
  plaques: Is there a connection?}
\newblock {\em Bull. Math. Biol.}, 65(4):693--730, 2003.

\bibitem{MarrasViglialoroMathNach}
M.~Marras and G.~Viglialoro.
\newblock Boundedness in a fully parabolic chemotaxis-consumption system with
  nonlinear diffusion and sensitivity, and logistic source.
\newblock {\em Math. Nachr.}, 291(14--15):2318--2333, 2018.

\bibitem{Mock74SIAM}
M.~S. Mock.
\newblock An initial value problem from semiconductor device theory.
\newblock {\em SIAM J. Math. Anal.}, 5:597--612, 1974.

\bibitem{Mock75JMAA}
M.~S. Mock.
\newblock Asymptotic behavior of solutions of transport equations for
  semiconductor devices.
\newblock {\em J. Math. Anal. Appl.}, 49:215--225, 1975.

\bibitem{Nagai}
T.~Nagai.
\newblock Blowup of nonradial solutions to parabolic-elliptic systems modeling
  chemotaxis in two-dimensional domains.
\newblock {\em J. Inequal. Appl.}, 6(1):37--55, 2001.

\bibitem{Nirenber_GagNir_Ineque}
L.~Nirenberg.
\newblock On elliptic partial differential equations.
\newblock {\em Ann. Scuola Norm. Sup. Pisa Cl. Sci. (3)}, 2(13):115--162, 1959.

\bibitem{OsYagUnidim}
K.~Osaki and A.~Yagi.
\newblock {Finite dimensional attractor for one-dimensional Keller--Segel
  equations}.
\newblock {\em Funkcial. Ekvacioj.}, 44(3):441--470, 2001.

\bibitem{GuoqiangBinATT-RepNonlinDiffSensLogistic}
G.~Ren and B.~Liu.
\newblock Global boundedness and asymptotic behavior in a quasilinear
  attraction–repulsion chemotaxis model with nonlinear signal production and
  logistic-type source.
\newblock {\em Math. Models Methods Appl. Sci.}, 30(13):2619--2689, 2020.

\bibitem{GuoqiangBin-2022-3DAttRep}
G.~Ren and B.~Liu.
\newblock Boundedness and stabilization in the 3{D} minimal
  attraction-repulsion chemotaxis model with logistic source.
\newblock {\em Z. Angew. Math. Phys.}, 73(2):Paper No. 58, 25, 2022.

\bibitem{TaoWanM3ASAttrRep}
Y.~Tao and Z.-A. Wang.
\newblock Competing effects of attraction vs. repulsion in chemotaxis.
\newblock {\em Math. Models Methods Appl. Sci.}, 23(1):1--36, 2013.

\bibitem{TaoWinkParaPara}
Y.~Tao and M.~Winkler.
\newblock {Boundedness in a quasilinear parabolic-parabolic Keller--Segel
  system with subcritical sensitivity}.
\newblock {\em J. Differerential Equations}, 252(1):692--715, 2012.

\bibitem{VIGLIALORO-JMAA-BlowUp-Attr-Rep}
G.~Viglialoro.
\newblock Explicit lower bound of blow-up time for an attraction-repulsion
  chemotaxis system.
\newblock {\em J. Math. Anal. App.}, 479(1):1069--1077, 2019.

\bibitem{ViglialoroMatNacAttr-Repul}
G.~Viglialoro.
\newblock Influence of nonlinear production on the global solvability of an
  attraction-repulsion chemotaxis system.
\newblock {\em Math. Nachr.}, 294(12):2441--2454, 2021.

\bibitem{WinklAggre}
M.~Winkler.
\newblock {Aggregation vs. global diffusive behavior in the higher-dimensional
  Keller--Segel model}.
\newblock {\em J. Differerential Equations}, 248(12):2889--2905, 2010.

\bibitem{WinklerHowFar}
M.~Winkler.
\newblock How far can chemotactic cross-diffusion enforce exceeding carrying
  capacities?
\newblock {\em J. Nonlinear. Sci.}, 24(5):809--855, 2014.

\bibitem{WinklerNoNLinearanalysisSublinearProduction}
M.~Winkler.
\newblock A critical blow-up exponent in a chemotaxis system with nonlinear
  signal production.
\newblock {\em Nonlinearity}, 31(5):2031--2056, 2018.

\bibitem{YUGUOZHENG-Attr-Repul}
H.~Yu, Q.~Guo, and S.~Zheng.
\newblock Finite time blow-up of nonradial solutions in an attraction-repulsion
  chemotaxis system.
\newblock {\em Nonlinear Anal. Real World Appl.}, 34:335--342, 2017.

\bibitem{XinluEtAl2022-Asymp-AttRepNonlinProd}
X.~Zhou, Z.~Li, and J.~Zhao.
\newblock Asymptotic behavior in an attraction-repulsion chemotaxis system with
  nonlinear productions.
\newblock {\em J. Math. Anal. Appl.}, 507(1):Paper No. 125763, 24, 2022.

\end{thebibliography}
\end{document}